\newtheorem{theorem}{Theorem}
\newtheorem{proposition}[theorem]{Proposition}
\newtheorem{lemma}[theorem]{Lemma}
\theoremstyle{remark}
\newtheorem{definition}[theorem]{Definition}
\newtheorem{example}[theorem]{Example}
\numberwithin{equation}{section}
\newcommand{\qbin}{\genfrac{[}{]}{0pt}{}}
\newcommand{\C}{\mathbb C}
\newcommand{\R}{\mathbb R}
\newcommand{\N}{\mathbb N}
\author{Michael J.\ Schlosser}
\address{Fakult\"at f\"ur Mathematik, Universit\"at Wien,
Oskar-Morgenstern-Platz~1, A-1090 Vienna, Austria}
\email{michael.schlosser@univie.ac.at}
\author{Koushik Senapati}
\address{Fakult\"at f\"ur Mathematik, Universit\"at Wien,
Oskar-Morgenstern-Platz~1, A-1090 Vienna, Austria}
\email{koushik.bapan.19@gmail.com}
\author{Ali K.\ Uncu}
\address{Johann Radon Institute for Computational and Applied Mathematics,
Austrian Academy of Sciences,
Alten\-berger\-stra{\ss}e~69, A-4040 Linz, Austria}
\email{akuncu@risc.jku.at}
\title[Log-concavity of elliptic binomial coefficients]{Log-concavity results
for a biparametric and an elliptic extension of the $q$-binomial coefficients}
\dedicatory{Dedicated to Bruce Berndt, on the occasion of his 80th birthday.}
\subjclass[2010]{Primary 05A20; Secondary 05A10, 05A30, 11F27, 26D20, 33E05}
\keywords{log-concavity, $q$-series, binomial coefficients, theta functions,
elliptic functions, Tur\'an's inequality}
\begin{document}

\begin{abstract}
We establish discrete and continuous log-concavity results for a
biparametric extension of the $q$-numbers and of the $q$-binomial coefficients.
By using classical results for the Jacobi theta function we are able to lift
some of our log-concavity results to the elliptic setting.
One of our main ingredients is a putatively new lemma involving a
multiplicative analogue of Tur\'an's inequality.
\end{abstract}

\maketitle

\section{Introduction}\label{secintro}

In this paper we extend some well-known properties of the $q$-numbers and
$q$-binomial coefficients (a.k.a.\ \textit{Gau{\ss}ian binomial coefficients})
to settings involving extra parameters.
In particular, we establish discrete and continuous log-concavity results
for certain uniparametric, biparametric, and even elliptic extensions
of the $q$-numbers and $q$-binomial coefficients.

Given any complex $q\neq1$, the $q$-analogue of a complex number $x$
is defined by
\begin{equation*}
[x]_q:=\frac{1-q^x}{1-q}.
\end{equation*}
We refer to $[x]_q$ as a \textit{$q$-number} (or \textit{basic number}). 
The $q$-numbers play an important role in the theory of integer partitions
(see Andrews' book~\cite{Andrews} and historical references cited in there).
One can recover $x$ from $[x]_q$ by letting $q$ tend to $1$. 
Log-concavity results for $q$-binomial coefficients were given by
Butler~\cite{Butler}, Krattenthaler~\cite{Krattenthaler} and
Sagan~\cite{Sagan2}. (In this connection it should be mentioned
that the $q$-log-concavity or weighted log-concavity considered in the
literature implies log-concavity if the weights are non-negative.)
Recently Kalmykov and Karp~\cite{KalmykovKarp} established log-concavity
results (or equivalently, Tur\'an type inequalities) for specific basic
hypergeometric series.

For $a,b,q\in\C$ we define the $a,b;q$-extension
of a complex number $x$ as follows:
\begin{equation}\label{abqNum}
[x]_{a,b;q} :=
\frac{(1-q^x)(1-aq^x)(1-bq)(1-aq/b)}{(1-q)(1-aq)(1-bq^x)(1-aq^x/b)},
\end{equation}
where the variables are chosen such that none of the denominator factors vanish.
(This definition corresponds to that of the $a,b;q$-numbers considered
by the first author and Yoo in
\cite{SchlosserYoo1,SchlosserYoo2,SchlosserYoo3,SchlosserYoo5}.
In the latter $b$ has to be replaced by $bq^{-1}$ to match the definition
used in \eqref{abqNum}.)
Letting $a\to 0$ followed by $b\to 0$ (or $b\to 0$ followed by $a\to\infty$),
the $a,b;q$-numbers reduce to the basic numbers.

In Section~\ref{SectionDef} we list some elementary properties of the
$a,b;q$-numbers and explain the various notions of log-concavity we are
concerned about in this paper. Section~\ref{SectionNum} deals with results
about the log-concavity of the $a,b;q$-numbers. The lemma proved in that
section involves a multiplicative analogue of Tur\'an's inequality and plays
a key role in proving results involving $a,b;q$-numbers and $a,b;q$-binomial
coefficients, and in proving results in the (more general) elliptic setting.
Section~\ref{SectionBin} is devoted to log-concavity results for uni- and
biparametric extensions of the $q$-binomial coefficient. The
$a,b;q$-numbers can be further extended to the elliptic numbers that
appeared in \cite{SchlosserYoo1,SchlosserYoo2,SchlosserYoo3,SchlosserYoo5},
and are the contents of study in Sections~\ref{sec:ell-prim} and
\ref{sec:ell-results}.
Our elliptic numbers are indeed elliptic functions (i.e., they are meromorphic
and doubly periodic); they are expressed as certain ratios of theta functions.
Accordingly, the analysis in Sections~\ref{sec:ell-prim} and
\ref{sec:ell-results} involves some machinery from the theory of Jacobi
theta functions (or, equivalently, of the Weierstra{\ss} sigma function)
which is classical but not so well-known in the community of $q$-series,
which is the reason why we cover this material in separate sections of our
paper. Finally, in Section~\ref{sec:con} we conclude with an outlook of
further open problems.

\section{Preliminaries}\label{SectionDef}

It is a matter of simple algebra to verify for arbitrary
$x$ and $y$ the following addition formula
for the $a,b;q$-numbers
defined in \eqref{abqNum}:
\begin{subequations}\label{abq_addition}
\begin{equation}
[x]_{a,b;q} + W_{a,b;q}(x) [y-x]_{aq^{2x},bq^x;q} = [y]_{a,b;q},\end{equation}
where $W_{a,b;q}(x)$ is the \textit{$a,b;q$-weight}, defined by
\begin{equation}\label{single_weight}
W_{a,b;q}(x) = \frac{(1-aq^{1+2x})(1-b)(1-bq)(1-a/b)(1-aq/b)}
{(1-aq)(1-bq^x)(1-bq^{1+x})(1-aq^x/b)(1-aq^{1+x}/b)}q^x.
\end{equation}
\end{subequations}

Now if we impose $0<q<1$ and $0<a<b<1$
(in particular, all variables are presumed to be real),
it is easy to see that for any real $x>0$ we have
\begin{equation}\label{positivity_dom}
[x]_{a,b;q}>0\qquad and\qquad W_{a,b;q}(x)>0,
\end{equation}
as all the factors appearing in the respective quotients
are manifestly positive.

It is also easy to observe the following three properties of the
$a,b;q$-numbers and the associated $a,b;q$-weights:
\begin{subequations}\label{properties}
\begin{align}
\label{zero_property}[0]_{a,b;q} &= 0\qquad and\qquad W_{a,b;q}(0)=1,\\
\label{order_rel}[x]_{a,b;q} &\geq [y]_{a,b;q}\qquad
\text{for}\quad 0<q<1,\quad x\geq y\ge 0, \quad\text{and}\quad 0<a<b<1,\\
\label{negative_numbers_def}[x]_{a,b;q}  &= - W_{a,b;q}(x)\,[-x]_{aq^{2x},bq^x;q}.
\end{align}
\end{subequations}

The relation \eqref{order_rel} follows from the addition formula
in \eqref{abq_addition}, with $x$ and $y$ having been interchanged,
as for $x>y>0$ the difference $[x]_{a,b;q}-[y]_{a,b;q}$ is
$W_{a,b;q}(y) [x-y]_{aq^{2y},bq^y;q}$ which is positive by 
\eqref{positivity_dom}.
While we could use the above relation \eqref{negative_numbers_def}
to deal with the $a,b;q$-numbers of negative argument,
in this paper we shall restrict our attention to the case that the
arguments are non-negative real numbers.

There are two intermediate extensions from the basic-numbers
to the $a,b;q$-numbers. These two intermediate extensions correspond to the
limits $b\to 0$, and to $a\to 0$, in the $a,b;q$-numbers, respectively.
Specifically, one can let $b\to 0$ (or $b\to\infty$) in \eqref{abqNum},
by which one obtains the \textit{$a;q$-numbers}
(studied in \cite{SchlosserYoo2,SchlosserYoo4,SchlosserYoo5}):
\begin{equation}\label{a_q_numbers}
[x]_{a;q}:=[x]_{a,0;q} = \frac{(1-q^x)(1-aq^x)}{(1-q)(1-aq)}q^{1-x}.
\end{equation}
These do not only generalize the standard $q$-numbers $[x]_q$
obtained by letting $a\to 0$ in \eqref{a_q_numbers},
but also the \textit{quantum} numbers
$\langle x\rangle_q:=(q^x-q^{-x})/(q-q^{-1})$
(which frequently appear in physical models),
obtained by letting $a\to -1$ in \eqref{a_q_numbers}.

One can also let $a\rightarrow 0$ (or $a\to\infty$) in \eqref{abqNum}
and arrive at the \textit{$(b;q)$-numbers}
\begin{equation}\label{b_q_numbers}
[x]_{(b;q)}:=[x]_{0,b;q} = \frac{(1-q^x)(1-bq)}{(1-q)(1-bq^x)}.
\end{equation}
We decided to put parantheses in ``$(b;q)$-numbers'' but none in
``$a;q$-numbers'' to distinguish them in notation, thus to avoid confusion.
(For instance, we have $[x]_{(0;q)}=[x]_q$ but $[x]_{0;q}=[x]_{q^{-1}}$.)

In terms of standard terminology for basic hypergeometric series
(cf.\ \cite{GasperRahman}), the basic hypergeo\-met\-ric expression
on the right-hand side of \eqref{a_q_numbers} is \textit{well-poised}
and that on the right-hand side of \eqref{b_q_numbers} is \textit{balanced}.
It should come to no surprise that the right-hand side of
\eqref{abqNum} is well-poised and balanced (while the corresponding
expression for the weight in \eqref{single_weight} is even
\textit{very-well-poised} and balanced).

We now explain different notions of log-concavity.
\begin{definition}\label{log_concavity}
A sequence of real numbers
$(a_k)_{k=0}^{\infty}$ (indexed by non-negative integers)
is called \textit{log-concave} if
\begin{equation}\label{log_conc_def}
a_k^2 \geq a_{k+1}a_{k-1},
\end{equation} for all $k\geq 1$.
Similarly, one calls a sequence $(a_k)_{k=0}^{\infty}$
\textit{strongly log-concave} if
\begin{equation}\label{st_log_conc_def} a_k a_l \geq a_{k+1}a_{l-1}\end{equation}
for all positive integers $k$ and $l$ with $k\geq l$.
\end{definition}
It is clear that if the $a_k$ are all positive (or all negative),
log-concavity implies strong log-concavity since \eqref{log_conc_def} then
is equivalent to
\begin{equation*}
\frac{a_k}{a_{k+1}}\ge\frac{a_{k-1}}{a_k},
\end{equation*}
which can be iterated to establish \eqref{st_log_conc_def}.

We will also use the notions of log-concavity and strong log-concavity
in the continuous setting.
\begin{definition}\label{log_concavity-func}
A function $a(x)$ depending on a non-negative real variable $x$
is called continuously log-concave if
\begin{equation}\label{log_conc_def-func}
a(x)^2 \geq a(x+r)a(x-r),
\end{equation}
for all $x\geq r\ge 0$, and continuously
strongly log-concave if
\begin{equation}\label{st_log_conc_def-func}
a(x) a(y) \geq a(x+r)a(y-r)\end{equation}
for all real $x\geq y\geq r\ge 0$.
\end{definition}
Again it is easy to see that if $a(x)>0$ for all $x\ge 0$ (or
$a(x)<0$ for all $x\ge 0$) log-concavity implies strong log-concavity
since the down-shift by $r$ of the arguments,
$a(x)/a(x+r)\ge a(x-r)/a(x)$, can be iterated with an additional
down-shift by $x-y$ to establish \eqref{st_log_conc_def-func}.

For a non-vanishing positive (or non-vanishing negative)
function $a(x)$, $x\ge 0$, one can equivalently express the continuous
strong log-concavity as 
\begin{equation}\label{log_conc_equivalent}
\frac{a(x+r)a(y-r)}{a(x) a(y)} \leq 1,
\end{equation}
where $x\geq y\geq r\geq 0$.
Similar reformulations can be applied to the other notions of
log-concavity considered above and we will be using them when convenient.

\begin{example}
Maybe the most trivial example of a continuously strong log-concave function
is the identity on $[0,\infty)$. Indeed, assuming $x\geq y\geq r>0$
(the case $r=0$ of \eqref{st_log_conc_def-func} is trivial),
\begin{equation}\label{eq:triv}
\frac{(x+r)(y-r)}{xy}<1
\end{equation}
of course holds, since $xy-(x+r)(y-r)=r(r+x-y)>0$.
This simple fact is already responsible for the continuous strong log-concavity
of the \textit{continuous binomial coefficients} (which were recently studied
by Salwinski~\cite{Salwinski} who proved identities satisfied by them,
among them also a continuous binomial theorem),
defined by
\begin{equation}\label{cont-bin1}
\binom xk=\frac{\Gamma(1+x)}{\Gamma(1+k)\Gamma(1+x-k)}
\qquad\text{for $x,k\in\C$, $x\notin -1,-2,\ldots$.}
\end{equation}
By virtue of Euler's product formula for the gamma function,
\begin{equation}
\Gamma(1+x)=\prod_{j=1}^\infty\frac{j^{1-x}(1+j)^x}{x+j}
\qquad\text{for $x\in\C$, $x\notin -1,-2,\ldots$},
\end{equation}
we may rewrite Equation~\eqref{cont-bin1} in the following convenient product form:
\begin{equation}
\binom xk=\prod_{j=1}^\infty\frac{(k+j)(x-k+j)}{j(x+j)}
\qquad\text{for $x,k\in\C$, $x\notin -1,-2,\ldots$.}
\end{equation}
It is now easy to deduce the following result:
\textit{For any real $x,y,k,l,r$ satisfying $x\geq y$,
$k\geq l\ge r\ge 0$, and $y-l\geq x-k$, we have the continuous
strong log-concavity}
\begin{equation}\label{binom_inequality}
\binom{x}{k}\binom{y}{l} \geq \binom{x}{k+r}\binom{y}{l-r}.
\end{equation}
\begin{proof}
The $r=0$ case is trivial, so assume $r>0$. After canceling common factors
we see that
\begin{equation*}
\frac{\binom x{k+r}\binom y{l-r}}{\binom x{k}\binom y{l}}=
\prod_{j=1}^\infty\frac{(k+r+j)(x-k-r+j)(l-r+j)(y-l+r+j)}{(k+j)(x-k+j)(l+j)(y-l+j)}<1,
\end{equation*}
since by taking different instances of \eqref{eq:triv}, we have
\begin{equation*}
\frac{(k+r+j)(l-r+j)}{(k+j)(l+j)}<1,\quad\;\text{and}\quad\,
\frac{(y-l+r+j)(x-k-r+j)}{(y-l+j)(x-k+j)}<1,
\end{equation*}
for each $j\geq 1$, which establishes the claim.
\end{proof}
\end{example}

\section{Log-concavity of $a,b;q$-numbers}\label{SectionNum}

To deal with the continuous log-concavity
of the $a;q$-, the $(b;q)$- and the $a,b;q$-numbers we will make use
of the following elementary result.

\begin{proposition}\label{Proposition_1} For $0<q<1$ and $0\leq \nu <1$
the function $x\mapsto (1-\nu q^x)$, $x\ge 0$
is continuously strongly log-concave.
\end{proposition}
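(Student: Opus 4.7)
The plan is a direct algebraic verification using the definition of continuous strong log-concavity given in \eqref{st_log_conc_def-func}. Fix real $x\ge y\ge r\ge 0$, and consider the difference
\begin{equation*}
D:=(1-\nu q^x)(1-\nu q^y) - (1-\nu q^{x+r})(1-\nu q^{y-r}).
\end{equation*}
When one expands both products, the constant terms $1$ cancel, and the quadratic terms $\nu^2 q^{x+y}$ cancel as well, since $(x+r)+(y-r)=x+y$. This is the key observation that makes the proof elementary.

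What remains is the linear-in-$\nu$ piece
\begin{equation*}
D=\nu\bigl(q^{x+r}+q^{y-r}-q^x-q^y\bigr).
\end{equation*}
Next I would factor the bracket by grouping: $q^x(q^r-1)+q^{y-r}(1-q^r)=(1-q^r)(q^{y-r}-q^x)$, so that
\begin{equation*}
D=\nu\,(1-q^r)(q^{y-r}-q^x).
\end{equation*}

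Finally I would argue that each of the three factors is nonnegative under the assumed conditions. We have $\nu\ge 0$ by hypothesis; $1-q^r\ge 0$ since $0<q<1$ and $r\ge 0$; and $q^{y-r}\ge q^x$ because the exponential $q^{\cdot}$ is decreasing for $0<q<1$ and $y-r\le y\le x$. Hence $D\ge 0$, which is exactly \eqref{st_log_conc_def-func} for $a(x)=1-\nu q^x$. I do not anticipate any genuine obstacle here: the only thing to notice is the cancellation of the quadratic term, after which the argument reduces to the monotonicity of $q^x$.
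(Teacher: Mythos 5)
Your computation is correct and follows essentially the same route as the paper: a direct expansion of the difference and a factorization showing it is nonnegative (indeed your factored form $\nu(1-q^r)(q^{y-r}-q^x)$ is precisely the paper's expression $\nu q^{y-r}(1-q^r)(1-q^{x-y+r})$ written with the factor $q^{y-r}$ distributed). Nothing further is needed.
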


\begin{proof} Assume that $x \geq y \geq r\geq 0$.
Using the definition \eqref{st_log_conc_def-func}
directly and rearranging the terms yields
\begin{equation*}
(1-\nu q^x)(1-\nu q^y)
-(1-\nu q^{x+r})(1-\nu q^{y-r})= \nu q^{y-r}(1-q^r)(1-q^{x-y+r})\ge 0.
\qedhere
\end{equation*}
\end{proof}

In particular, we can utilize Proposition~\ref{Proposition_1}
to establish the following result involving the $a;q$-numbers.

\begin{theorem}\label{a_q_num_log_concave}
For $0<q<1$ and $0<a<1$
the $a;q$-numbers $[x]_{a;q}$ are continuously strongly log-concave.
\end{theorem}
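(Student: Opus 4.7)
The plan is to exploit the product formula $[x]_{a;q} = q^{1-x}(1-q^x)(1-aq^x)/((1-q)(1-aq))$ from \eqref{a_q_numbers} and reduce the desired log-concavity inequality to factor-wise inequalities already covered by Proposition~\ref{Proposition_1}.

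First I would fix $x \geq y \geq r \geq 0$ and form the ratio $[x+r]_{a;q}[y-r]_{a;q}/([x]_{a;q}[y]_{a;q})$. The common prefactor $1/((1-q)(1-aq))^2$ cancels outright, and the powers of $q$ combine as $q^{(1-x-r)+(1-y+r)-(1-x)-(1-y)} = q^0 = 1$, so the $q$-power contribution vanishes entirely. What remains is the product of two ratios,
\begin{equation*}
\frac{(1-q^{x+r})(1-q^{y-r})}{(1-q^x)(1-q^y)}\cdot\frac{(1-aq^{x+r})(1-aq^{y-r})}{(1-aq^x)(1-aq^y)},
\end{equation*}
and the goal is to show that each of these ratios is at most $1$.

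The second ratio is bounded by Proposition~\ref{Proposition_1} with $\nu=a$ (valid since $0<a<1$). For the first ratio, Proposition~\ref{Proposition_1} as stated requires $\nu<1$, but inspection of its one-line proof shows that the identity $(1-\nu q^x)(1-\nu q^y) - (1-\nu q^{x+r})(1-\nu q^{y-r}) = \nu q^{y-r}(1-q^r)(1-q^{x-y+r})$ remains non-negative when $\nu=1$. I would either cite this extension directly or obtain it by letting $\nu\to 1^{-}$ in Proposition~\ref{Proposition_1} via continuity, which settles the first ratio.

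Combining the two factor-wise inequalities yields the bound $\leq 1$, and since $[x]_{a;q}>0$ for all $x\geq 0$ under our hypotheses, this is equivalent via \eqref{log_conc_equivalent} to the continuous strong log-concavity \eqref{st_log_conc_def-func}. The only conceptual subtlety is the admissibility of the boundary value $\nu=1$ in Proposition~\ref{Proposition_1}, handled in either of the two ways above; beyond that, the argument is a matter of bookkeeping and I anticipate no serious obstacle.
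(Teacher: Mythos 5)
Your proposal is correct and follows essentially the same route as the paper's own proof: cancel the prefactors and the powers of $q$, split the ratio into the two factors, and bound each by Proposition~\ref{Proposition_1} with $\nu=a$ and $\nu=1$. Your extra care about the boundary case $\nu=1$ (which the paper invokes without comment, even though the proposition is stated for $0\le\nu<1$) is a harmless and indeed welcome refinement, since the one-line identity in the proof of Proposition~\ref{Proposition_1} clearly persists at $\nu=1$.
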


\begin{proof}
Let $x\geq y\geq r\geq 0$. Using \eqref{log_conc_equivalent}
we have to show that the fraction
\begin{equation*}
\frac{[x+r]_{a;q}[y-r]_{a;q}}{[x]_{a;q}[y]_{a;q}} =
\frac{(1-q^{x+r})(1-q^{y-r})}{(1-q^x)(1-q^y)}
\frac{(1-aq^{x+r})(1-aq^{y-r})}{(1-aq^x)(1-aq^y)}
\end{equation*}
is less or equal to $1$.
Now the two fractions on the right-hand side are both non-negative
and less than or equal to $1$ by virtue of Proposition~\ref{Proposition_1}
(with $\nu=1$ and $\nu=a$), and \eqref{log_conc_equivalent}.
Thus their product is less than or equal to $1$.
\end{proof}

The same argument is not applicable to the $(b;q)$-numbers
$[x]_{(b;q)}$, nevertheless we can show that the $(b;q)$-numbers
possess the continuously strong log-concavity property by elementary algebra. 

\begin{theorem}\label{b_q_num_log_concave}
For $0<q<1$ and $0<b<1$ the $(b;q)$-numbers
$[x]_{(b;q)}$ are continuously strongly log-concave.
\end{theorem}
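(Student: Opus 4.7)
The plan is to write out the strong log-concavity condition \eqref{log_conc_equivalent} for $[x]_{(b;q)}$ directly from the definition \eqref{b_q_numbers}. After the constant factor $\bigl((1-bq)/(1-q)\bigr)^2$ cancels, the desired inequality becomes
\[
(1-q^x)(1-q^y)(1-bq^{x+r})(1-bq^{y-r}) \geq (1-q^{x+r})(1-q^{y-r})(1-bq^x)(1-bq^y)
\]
for all $x\geq y\geq r\geq 0$. A direct appeal to Proposition~\ref{Proposition_1}, mimicking the proof of Theorem~\ref{a_q_num_log_concave}, fails here: the ratio of the $(1-q^\bullet)$-factors on the two sides is at most $1$, but the ratio of the $(1-bq^\bullet)$-factors is at least $1$, so the two estimates point in opposite directions and only partially cancel.

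To overcome this, I would substitute $u:=q^x$, $v:=q^y$, $s:=q^r$, which under our hypotheses satisfy $0<u\leq v\leq s\leq 1$ (the last inequality uses $y\geq r$), and multiply both sides by $s$ to clear the $1/s$ factors introduced by the exponent $y-r$. The problem reduces to showing $\Delta\geq 0$, where
\[
\Delta:=(1-u)(1-v)(1-bus)(s-bv)-(1-us)(s-v)(1-bu)(1-bv).
\]
The crux of the argument is the clean factorisation
\[
\Delta=(1-b)(1-s)(v-us)(1-buv).
\]
Granting this, the proof is immediate: each factor is non-negative under the standing assumptions, with $v-us\geq 0$ following from $u\leq v$ and $s\leq 1$, and $1-buv>0$ from $b,u,v<1$.

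The main obstacle is discovering the factorisation, which I would locate by expanding $\Delta$ and collecting by powers of $b$. The constant term (in $b$) simplifies to $(v-us)(1-s)$; the coefficients of $b^1$ and $b^2$ also turn out to be multiples of $(v-us)(1-s)$, namely $-(v-us)(1-s)(1+uv)$ and $uv(v-us)(1-s)$ respectively. After pulling out the common factor $(v-us)(1-s)$, what remains is the quadratic $1-b(1+uv)+b^2 uv=(1-b)(1-buv)$, and the factorisation drops out. The computation is somewhat lengthy but entirely elementary, in line with the parenthetical remark preceding the theorem.
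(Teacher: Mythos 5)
Your proof is correct and is essentially the paper's own argument: the paper likewise proceeds by elementary algebra, writing the difference $[x]_{(b;q)}[y]_{(b;q)}-[x+r]_{(b;q)}[y-r]_{(b;q)}$ as a manifestly non-negative product, and your factorisation $\Delta=(1-b)(1-s)(v-us)(1-buv)$ is exactly that identity after clearing the positive denominators and substituting $u=q^x$, $v=q^y$, $s=q^r$. The verification of the factorisation checks out, so no gap remains.
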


\begin{proof} Let $x\geq y\geq r\geq 0$. By direct computation,
we see that 
\begin{align*}
&[x]_{(b;q)}[y]_{(b;q)}-[x+r]_{(b;q)}[y-r]_{(b;q)}\\
&=\frac{(1-q^r)(1-q^{x-y+r})(1-b)(1-bq)^2(1-bq^{x+y})}
{(1-q)^2(1-bq^x)(1-bq^{x+r})(1-bq^{y})(1-bq^{y-r})}q^{y-r}.
\end{align*} 
All the factors appearing in the fraction on the right-hand side
are non-negative. Thus the whole product is non-negative, so
\eqref{st_log_conc_def-func} holds.
\end{proof}

We now move our attention to the $a,b;q$-numbers.
Before we show the continuous strong log-concavity of these numbers according
to  Definition~\ref{log_concavity-func}, we show that these numbers
satisfy an $a,b;q$-version of continuous strong log-concavity
where certain shifts of the variables $a$ and $b$ occur.

\begin{theorem}\label{log_concavity_abq_addition}
For all real $x,y,a,b,q$ satisfying $0<q<1$, $0<a<b<1$, and
$x\geq y\geq r\geq 0$, the $a,b;q$-numbers $[x]_{a,b;q}$
satisfy the inequality 
\begin{equation*}
[x]_{aq^{2r},bq^r;q}[y]_{a,b;q} \geq [x+r]_{a,b;q}[y-r]_{aq^{2r},bq^r;q}.
\end{equation*}
\end{theorem}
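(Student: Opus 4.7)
The plan is to exploit the addition formula \eqref{abq_addition} in a form where the ``inner'' variable is $r$. Choosing the ``outer'' variable successively as $y$ and as $x+r$, the formula yields
\begin{align*}
[y]_{a,b;q}&=[r]_{a,b;q}+W_{a,b;q}(r)\,[y-r]_{aq^{2r},bq^r;q},\\
[x+r]_{a,b;q}&=[r]_{a,b;q}+W_{a,b;q}(r)\,[x]_{aq^{2r},bq^r;q}.
\end{align*}
The key observation is that the weight $W_{a,b;q}(r)$ is the \emph{same} in both lines, and that the tail numbers produced on the right-hand sides are precisely the two $aq^{2r},bq^r;q$-numbers that appear in the inequality to be proved.

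Substituting these two identities into the difference we want to show is non-negative, the $W_{a,b;q}(r)$ cross terms cancel verbatim and one is left with
\begin{equation*}
[x]_{aq^{2r},bq^r;q}\,[y]_{a,b;q}-[x+r]_{a,b;q}\,[y-r]_{aq^{2r},bq^r;q}
=[r]_{a,b;q}\bigl([x]_{aq^{2r},bq^r;q}-[y-r]_{aq^{2r},bq^r;q}\bigr).
\end{equation*}
It therefore remains to check that both factors on the right are non-negative.

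For the prefactor this is immediate from \eqref{positivity_dom} (and when $r=0$ the original inequality is a triviality). For the bracketed difference I would invoke the order relation \eqref{order_rel} with the parameters $a$ and $b$ replaced by $aq^{2r}$ and $bq^r$ respectively; this replacement is permissible because the inequalities $0<aq^{2r}<bq^r<1$ follow at once from $0<a<b<1$ and $0<q<1$, while the argument condition $x\ge y-r\ge 0$ is implied by $x\ge y\ge r\ge 0$. The only real obstacle is spotting the correct specialization of the addition formula; once one realizes that choosing $r$ for the inner variable forces the weights in the two decompositions to coincide, the remainder is a short bookkeeping that appeals only to \eqref{positivity_dom} and \eqref{order_rel} and bypasses any direct comparison of the four-fold products inherent in the definition \eqref{abqNum}.
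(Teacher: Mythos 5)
Your proposal is correct and follows essentially the same route as the paper: both decompose $[y]_{a,b;q}$ and $[x+r]_{a,b;q}$ via the addition formula \eqref{abq_addition} with inner argument $r$, cancel the common weight $W_{a,b;q}(r)$, and reduce the claim to $[r]_{a,b;q}\bigl([x]_{aq^{2r},bq^r;q}-[y-r]_{aq^{2r},bq^r;q}\bigr)\ge 0$. The only cosmetic difference is that you finish by citing \eqref{order_rel} (with $a\mapsto aq^{2r}$, $b\mapsto bq^r$), whereas the paper re-applies the addition formula once more to exhibit the bracket as $W_{aq^{2r},bq^r;q}(y-r)\,[x-y+r]_{aq^{2y},bq^y;q}$ — but \eqref{order_rel} is itself proved in exactly that way, so the arguments coincide in substance.
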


\begin{proof}
Using \eqref{abq_addition}, we have
\begin{align*}
&[x]_{aq^{2r},bq^r;q}[y]_{a,b;q} - [x+r]_{a,b;q}[y-r]_{aq^{2r},bq^r;q}\\
&=[x]_{aq^{2r},bq^r;q}
\big([r]_{a,b;q}+W_{a,b;q}(r)[y-r]_{aq^{2r},bq^r;q}\big)\\
  &\qquad- \big([r]_{a,b;q}+
    W_{a,b;q}(r)[x]_{aq^{2r},bq^r;q}\big)[y-r]_{aq^{2r},bq^r;q}\\
&=[r]_{a,b;q}\big([x]_{aq^{2r},bq^r;q} - [y-r]_{aq^{2r},bq^r;q}\big)\\
&=[r]_{a,b;q}W_{aq^{2r},bq^r;q}(y-r)[x-y+r]_{aq^{2y},bq^y;q}.
\end{align*}
The non-negativity of the last expression is clear from \eqref{positivity_dom}. 
\end{proof}

We can actually avoid using shifts of the variables $a$ and $b$
and show (pure) continuous strong log-concavity of the $a,b;q$-numbers.

\begin{theorem}\label{direct_log_concavity}
The $a,b;q$-numbers $[x]_{a,b;q}$ are continuously
strongly log-concave. In particular, for all real $x,y,a,b,q$ satisfying $0<q<1$, $0<a<b<1$, and $x\geq y\geq r\geq 0$, we have
\begin{equation*}
[x]_{a,b;q}[y]_{a,b;q}\geq [x+r]_{a,b;q}[y-r]_{a,b;q}.
\end{equation*}
\end{theorem}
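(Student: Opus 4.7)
The plan is to prove the stronger statement that $x\mapsto\log[x]_{a,b;q}$ is a concave function on $[0,\infty)$; together with the positivity $[x]_{a,b;q}>0$ from \eqref{positivity_dom} and the remark following Definition~\ref{log_concavity-func}, this yields the claimed continuous strong log-concavity. From the explicit form \eqref{abqNum},
\[
\log[x]_{a,b;q} \;=\; \log(1-q^x)+\log(1-aq^x)-\log(1-bq^x)-\log(1-aq^x/b) + \text{const.,}
\]
and a short calculation extending Proposition~\ref{Proposition_1} gives $(\log(1-\nu q^x))''=-(\log q)^2\nu q^x/(1-\nu q^x)^2$. Thus, after canceling the common positive factor $(\log q)^2 q^x$, the concavity of $\log[x]_{a,b;q}$ in $x$ is equivalent to the pointwise inequality
\[
g(1)+g(a)\;\ge\;g(b)+g(a/b),\qquad g(\nu):=\frac{\nu}{(1-\nu t)^2},\ \ t:=q^x\in(0,1).
\]

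The key observation is that $1\cdot a = b\cdot(a/b) = a$: the two pairs $\{1,a\}$ and $\{b,a/b\}$ share the same product $a$. This equal-product structure is the natural habitat of the ``multiplicative analogue of Tur\'an's inequality'' announced in the introduction. To prove the displayed inequality I pass to the logarithmic variable $u:=\log\nu$ and set $\tilde g(u):=g(e^u)=e^u/(1-te^u)^2$; a direct computation yields
\[
\tilde g''(u)=\frac{e^u(1+4te^u+t^2e^{2u})}{(1-te^u)^4},
\]
which is strictly positive on the relevant range $u<-\log t$, so $\tilde g$ is convex. The symmetry $b\leftrightarrow a/b$, under which $[x]_{a,b;q}$ is invariant, allows me to assume without loss of generality that $b\ge\sqrt a$; in that case the log-pair $(0,\log a)$ majorizes $(\log b,\log a-\log b)$, since both pairs sum to $\log a$ and $\max(0,\log a)=0\ge\log b=\max(\log b,\log a-\log b)$. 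Since the symmetric sum $(u,v)\mapsto\tilde g(u)+\tilde g(v)$ of a convex function is Schur-convex, majorization delivers $\tilde g(0)+\tilde g(\log a)\ge\tilde g(\log b)+\tilde g(\log a-\log b)$, which is precisely the required inequality.

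Combining everything, $(\log[x]_{a,b;q})''\le 0$ on $[0,\infty)$, so $[x]_{a,b;q}$ is continuously log-concave, and by positivity this upgrades to the continuous strong log-concavity asserted in the theorem. The hardest step will be the passage from Proposition~\ref{Proposition_1} to the mixed-sign inequality above: Proposition~\ref{Proposition_1} handles each factor $(1-\nu q^x)$ in the ``right'' direction, but the denominator factors in \eqref{abqNum} force a combination of four terms with alternating signs, and this is precisely the point where the multiplicative Tur\'an mechanism (equal product together with Schur-convexity of $\tilde g$) is indispensable.
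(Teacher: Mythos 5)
Your proposal is correct, but it proves the theorem by a genuinely different route than the paper. You differentiate $\log[x]_{a,b;q}$ twice in $x$ and, after the (correct) computation $(\log(1-\nu q^x))''=-(\log q)^2\nu q^x/(1-\nu q^x)^2$, reduce everything to the four-point inequality $g(1)+g(a)\ge g(b)+g(a/b)$ with $g(\nu)=\nu/(1-\nu t)^2$, $t=q^x$, which you settle by convexity of $g(e^u)$ in the logarithmic variable together with majorization of $(\log b,\log a-\log b)$ by $(0,\log a)$ (Karamata/Schur-convexity); the equal-product structure $1\cdot a=b\cdot(a/b)$ is exactly what makes this work, and your verification of the majorization and of $\tilde g''>0$ on the relevant range is sound (the boundary cases $x=r$ or $r=0$, where $[0]_{a,b;q}=0$, are trivial, just as in the paper). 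The paper instead fixes $x$ and $r$, writes the ratio $[x+r]_{a,b;q}[x-r]_{a,b;q}/[x]_{a,b;q}^2$ as $f(1)f(a)/\big(f(b)f(a/b)\big)$ with $f$ as in \eqref{f_func}, checks $f'<0$ and $f''<0$ in the parameter $u$, and invokes Lemma~\ref{lem:mult_Turan}. The trade-off: your argument is self-contained and conceptually clean at the $a,b;q$ level (it exhibits concavity of $x\mapsto\log[x]_{a,b;q}$ directly), whereas the paper's abstract lemma, with its seemingly superfluous parameter $\delta$, is formulated so that the very same mechanism transfers verbatim to quotients of theta functions via Proposition~\ref{prop:theta}, yielding the elliptic Theorems~\ref{direct_log_concavity_ell} and \ref{binom_top_log_concavity_ell}; to push your Schur-convexity route to that setting one would still need the analogous second-derivative facts for theta quotients, which is essentially the classical machinery the paper isolates there.
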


Theorem~\ref{direct_log_concavity} is a direct consequence
of a general lemma, see Lemma~\ref{lem:mult_Turan} below,
which involves a multiplicative analogue of Tur\'an's inequality.
In its formulation, the role of the variable $\delta$,
which is assumed to be a fixed non-negative real number, at first
may seem to be enigmatic. Indeed, for the sole purpose of proving
Theorem~\ref{direct_log_concavity}, the case $\delta=0$ would
completely suffice. However, we shall also employ the lemma
when dealing with theta functions in the proofs of
Theorems~\ref{direct_log_concavity_ell} and \ref{binom_top_log_concavity_ell}
and there we require $\delta$ to be a specific positive real number.

\begin{lemma}\label{lem:mult_Turan}
Let $0\leq\delta<\lambda$.
Let $f$ be a continuous positive real function on the
interval $[\delta,\lambda]$, twice differentiable
with values of first and second derivatives being
negative on the interval $(\delta,\lambda)$. Let $\delta< a\leq b<\lambda$.
Then we have the inequality
\begin{equation}\label{mult_Turan}
\frac{f(\lambda)f(a)}{f(b)f(\lambda a/b)} \leq 1.
\end{equation}
\end{lemma}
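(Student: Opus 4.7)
The strategy is to pass to logarithmic coordinates: setting $g(t):=\log f(e^t)$ converts the multiplicative scaling invariance of \eqref{mult_Turan} (under $(a,b,\lambda)\mapsto(\mu a,\mu b,\mu\lambda)$) into an additive invariance, and reduces the statement to ordinary concavity of $g$. Since $f>0$, $g$ is well-defined on the relevant interval; writing $\alpha:=\log a$, $\beta:=\log b$, $\Lambda:=\log\lambda$, taking logarithms in \eqref{mult_Turan} yields the equivalent inequality
\begin{equation*}
g(\alpha)+g(\Lambda)\le g(\beta)+g(\Lambda+\alpha-\beta).
\end{equation*}

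The first task is to establish strict concavity of $g$ on the open sub-interval. With $u=e^t$, the chain rule gives
\begin{equation*}
g''(t)=\frac{uf'(u)}{f(u)}+\frac{u^2f''(u)}{f(u)}-\left(\frac{uf'(u)}{f(u)}\right)^2.
\end{equation*}
Under the hypotheses $f>0$, $f'<0$, $f''<0$ on $(\delta,\lambda)$, the first two summands are strictly negative and the third is non-positive, so $g''<0$; by continuity of $f$, the concavity extends to the closed sub-interval $[\alpha,\Lambda]$.

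To finish, observe that the two pairs $(\alpha,\Lambda)$ and $(\beta,\Lambda+\alpha-\beta)$ share the common sum $\alpha+\Lambda$, and $\alpha\le\beta\le\Lambda$ places both $\beta$ and $\Lambda+\alpha-\beta$ in $[\alpha,\Lambda]$. Writing $\beta=(1-\theta)\alpha+\theta\Lambda$ with $\theta=(\beta-\alpha)/(\Lambda-\alpha)\in[0,1]$ (the case $\Lambda=\alpha$ being trivial), a direct computation gives $\Lambda+\alpha-\beta=\theta\alpha+(1-\theta)\Lambda$, so concavity of $g$ supplies
\begin{equation*}
g(\beta)\ge(1-\theta)g(\alpha)+\theta g(\Lambda),\qquad g(\Lambda+\alpha-\beta)\ge\theta g(\alpha)+(1-\theta)g(\Lambda),
\end{equation*}
which sum to the desired inequality. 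The main effort is the sign analysis of $g''$: each of the three sign hypotheses on $f$ is used, and they must be assembled correctly in the chain-rule expansion. Once concavity of $g$ is in hand, the rest is a routine application of the midpoint-style concavity inequality.
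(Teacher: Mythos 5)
Your proof is correct, and it takes a genuinely different route from the paper's. You recast the hypotheses as concavity of $g(t)=\log f(e^t)$: the chain-rule identity $g''(t)=\tfrac{uf'(u)}{f(u)}+\tfrac{u^2f''(u)}{f(u)}-\bigl(\tfrac{uf'(u)}{f(u)}\bigr)^2$ is right, each term is $\le 0$ (the first two strictly) under $f>0$, $f'<0$, $f''<0$ with $u>0$, and continuity of $f$ legitimately extends the concavity to the closed interval $[\alpha,\Lambda]$, which matters because $\lambda$ is an endpoint where no derivative hypotheses are given. Since $a\le b<\lambda$ forces $\beta$ and $\Lambda+\alpha-\beta$ into $[\alpha,\Lambda]$ with the same sum $\alpha+\Lambda$, your two convex-combination inequalities sum to exactly the logarithm of \eqref{mult_Turan}; in effect you verify a two-point majorization (Karamata-type) inequality by hand. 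The paper instead argues directly on $f$: it sets $x=f(\lambda a/b)-f(\lambda)$, $y=f(b)-f(\lambda a/b)$, $z=f(a)-f(b)$, reduces \eqref{mult_Turan} to $f(\lambda)(z-x)-x(x+y)\le 0$, disposes of the case $z\le x$ immediately, and rules out $z>x$ by a contradiction built from two applications of the mean value theorem for $f'$ and one for $f''$. Your argument is more conceptual and arguably cleaner: it identifies the inequality as multiplicative log-concavity of $f$ and shows the stated derivative conditions are merely one sufficient criterion for it (any decreasing, multiplicatively log-concave $f$ would do), whereas the paper's proof stays entirely elementary, avoiding logarithmic coordinates and convexity theory at the cost of a case split and a contradiction argument.
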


\begin{proof} Without loss of generality, assume that $b \geq \lambda a/b$;
if not, one can switch $b$ and $\lambda a/b$.
Let $x= f(\lambda a/b)-f(\lambda)$, $y = f(b) -f(\lambda a/b)$, and
$z = f(a) - f(b)$.
Using these new variables the claimed inequality 
\eqref{mult_Turan} is equivalent to
\begin{equation*}
f(\lambda) (z-x) - x(x+y)\leq 0.
\end{equation*}
The $z\leq x$ case is clear (since $x\geq 0$ and $x+y>0$)
while the case $z>x$ is actually vacuous.
Indeed, $z>x$, i.e. $f(a)-f(b)>f(\lambda a/b)-f(\lambda )$,
would be equivalent to
\begin{equation*}
\frac{f(a)-f(\lambda a/b)}{\lambda-b}>\frac{f(b)-f(\lambda)}{\lambda-b},
\end{equation*}
which again is equivalent to
\begin{equation}\label{feq}
  \frac ab\cdot\frac{f(\frac{\lambda a}b)-f(a)}
  {\frac{\lambda a}b-a}<\frac{f(\lambda)-f(b)}{\lambda-b}.
\end{equation}
Recall that $a<\lambda a/b\le b<\lambda$.
By two applications of the mean value theorem there exist
$c\in(a,\lambda a/b)$ and $d\in(b,\lambda)$ (in particular,
$c<d$ must hold), such that
\begin{equation*}
  f'(c)=\frac{f(\frac{\lambda a}b)-f(a)}{\frac{\lambda a}b-a}
  \qquad\text{and}\qquad
f'(d)=\frac{f(\lambda)-f(b)}{\lambda-b}.
\end{equation*}
Since, by assumption, the first derivatives of $f$ are negative in
$(\delta,\lambda)$, we obtain from \eqref{feq} the following string
of inequalities:
\begin{equation*}
\frac ab f'(c)<f'(d)<0.
\end{equation*}
If we substract $f'(c)$ from both sides of the first inequality, and
subsequently divide by $d-c$, we obtain
\begin{equation*}
\left(\frac ab-1\right) \frac{f'(c)}{d-c}<\frac{f'(d)-f'(c)}{d-c}.
\end{equation*}
Now since $\frac ab-1<0$ and $f'(c)<0$ (by assumption), the product 
on the left-hand side of the inequality is positive.
But the expression on the right-hand side is negative,
as by the mean value theorem it is equal to the second derivative
of $f$ at some point in $(c,d)$,
which by assumption must be negative. This is a contradiction,
thus the lemma is proved.
\end{proof}

Now we are ready to prove Theorem~\ref{direct_log_concavity}.
\begin{proof}[Proof of Theorem~\ref{direct_log_concavity}]
By the remarks following Definition~\ref{log_concavity-func} it suffices
to show the continuous log-concavity of the $a,b;q$-numbers
\eqref{abqNum}, i.e.,
\begin{equation}\label{abq_log_conc}
\frac{[x+r]_{a,b;q}[x-r]_{a,b;q}}{[x]_{a,b;q}^2}\leq 1.
\end{equation}
Using the definition of the $a,b;q$-numbers \eqref{abqNum},
after some cancellations, we see that the left-hand side of
\eqref{abq_log_conc} is
\begin{align}\label{long_fractions}
&\frac{(1-q^{x+r})(1-q^{x-r})}{(1-q^x)^2}
\frac{(1-aq^{x+r})(1-aq^{x-r})}{(1-aq^x)^2}\notag\\
&\times\frac{(1-bq^x)^2}{(1-bq^{x+r})(1- bq^{x-r})}
\frac{(1-aq^x/b)^2}{(1-aq^{x+r}/b)(1-aq^{x-r}/b)}.
\end{align}
For a fixed pair of real numbers $x\geq r> 0$, let
\begin{equation}\label{f_func} f(u) := f_{q,x,r}(u) =
\frac{(1-uq^{x+r})(1-uq^{x-r})}{(1-uq^x)^2}\qquad\text{for $u\in[0,1]$}.
\end{equation}
We have
\begin{equation*}
f'(u)=-\frac{(1-q^r)^2(1+uq^x)}{(1-uq^x)^3}q^{x-r}<0,
\end{equation*}
and
\begin{equation*}
f''(u)=-\frac{2(1-q^r)^2(2+uq^x)}{(1-uq^x)^4}q^{2x-r}<0,
\end{equation*}
for any $u\in (0,1)$, 
which allows us to apply Lemma~\ref{lem:mult_Turan} with $f$ as defined
in \eqref{f_func}, with $\delta=0$ and $\lambda=1$.
This immediately establishes that the expression in
\eqref{long_fractions} is $\leq 1$.
\end{proof}

\section{$a,b;q$-Binomial coefficients and
log-concavity}\label{SectionBin}

In this section, we recall the definition of the $a,b;q$-analogue of binomial
coefficients (that first appeared in work of the first author~\cite{Schlosser1}
in the context of enumeration of weighted lattice paths and subsequently were
studied in further work of the first author~\cite{Schlosser2} in connection
with a non-commutative binomial theorem, and in joint work of the first author
with Yoo~\cite{SchlosserYoo1, SchlosserYoo2}; in particular, these papers
contain their recurrence relations and combinatorial interpretations,
also for the more general $a,b;q,p$- or elliptic binomial coefficients,
which include the $a,b;q$-binomial coefficients obtained by letting $p\to 0$).

For the following definition we restrict the base $q$ to satisfy $0<|q|<1$
(while for studying inequalities we further assume $q$ to be real and
satisfy $0<q<1$). For parameter $a\in\C$ and lower index $k\in\C$,
the \textit{$q$-shifted factorial} is defined as
\begin{equation*}
(a;q)_k:=\frac{(a;q)_\infty}{(aq^k;q)_\infty}, \qquad\text{where}\quad
(a;q)_\infty=\prod_{j\geq 0}(1-aq^j)
\end{equation*}
cf.\ \cite{GasperRahman}. (For $a=q^{-n}$ with integers $n\geq k\geq 0$
the above definition for $(a;q)_k$ involves a pole which however can be
removed.)

For brevity, we use the following compact notation for products of
$q$-shifted factorials:
\begin{equation*}
(a_1,\dots,a_m;q)_k:=(a_1;q)_k\cdots(a_m;q)_k,\qquad\text{where}\quad
k\in\C\cup\{\infty\}.
\end{equation*}

For $x,k,a,b,q\in\C$, $0<|q|<1$, we define the $a,b;q$-binomial coefficient as
\begin{align}\label{a_b_q_binom}
\qbin{x}{k}_{a,b;q} :={} &\frac{(q^{1+k},aq^{1+k},bq^{1+k},aq^{1-k}/b;q)_{x-k}}
{(q,aq,bq^{1+2k},aq/b;q)_{x-k}}\\\notag
={}&\frac{(q^{1+x-k},aq^{1+x-k},bq^{1+k},aq^{1-k}/b;q)_k}
{(q,aq,bq^{1+x},aq^{1+x-2k}/b;q)_k}.
\end{align}
(Note that the $a,b;q$-binomial coefficients are \textit{not}
symmetric in $k$ and $x-k$, contrary to the symmetry for the
ordinary and basic binomial coefficients.)
If we assume $x$ and $k$ to be non-negative integers, and
replace $b$ by $bq$ on the right-hand side, then
definition \eqref{a_b_q_binom}
matches that of the $a,b;q$-binomial coefficient as used
in \cite{Schlosser1,Schlosser2,SchlosserYoo1,
  SchlosserYoo2,SchlosserYoo3,SchlosserYoo5}.

If we let $k=1$ and divide $b$ by $q$, the  $a,b;q$-binomial coefficient
reduces to the $a,b;q$-number defined in \eqref{abqNum}:
\begin{equation*}
\qbin{x}{1}_{a,b/q;q}=[x]_{a,b;q}.
\end{equation*}

We define the $a;q$ and $(b;q)$ analogues of binomials by taking limits
$b\rightarrow\infty$ and $a\rightarrow 0$
of \eqref{a_b_q_binom}, respectively. Then
\begin{equation}\label{aq_bq_binom}
\qbin{x}{k}_{a;q} := \frac{(q^{1+k},aq^{1+k};q)_{x-k}}
{(q,aq;q)_{x-k}} q^{k(k-x)} ,\quad\;\text{and}\quad\;
\qbin{x}{k}_{(b;q)} := \frac{(q^{1+k},bq^{1+k};q)_{x-k}}
{(q,bq^{1+2k};q)_{x-k}}.
\end{equation}
(While the $a;q$-binomial coefficients are symmetric in $k$ and $x-k$,
the $(b;q)$-binomial coefficients are not.) It is clear that
\begin{equation*}
\qbin{x}{1}_{a;q}=[x]_{a;q}\qquad\text{and}
\qquad\qbin{x}{1}_{(b/q;q)}=[x]_{(b;q)},
\end{equation*}
with the $a;q$- and $(b;q)$-numbers defined in \eqref{a_q_numbers} and
\eqref{b_q_numbers}, respectively.

Parallel to Section~\ref{SectionNum}, we start our
log-concavity discussion with the $a;q$-binomial coefficients.
For these we have continuous strong log-concavity with respect to
the \textit{lower} parameter of the binomial coefficient.

\begin{theorem}\label{aq_binomial_log_concavity_1}
For any real $a,x,y,k,l,r,q$ satisfying $0<q<1$, $0\leq a<1$, $x\geq y$,
$k\geq l\ge r\ge 0$, and $y-l\geq x-k$, we have the continuous
strong log-concavity
\begin{equation}\label{aq_binom_inequality}
\qbin{x}{k}_{a;q}\qbin{y}{l}_{a;q} \geq \qbin{x}{k+r}_{a;q}\qbin{y}{l-r}_{a;q}.
\end{equation}
\end{theorem}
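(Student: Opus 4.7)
The plan is to adapt the infinite-product strategy used for the continuous binomial coefficients in \eqref{binom_inequality}. First, by writing each $q$-shifted factorial as $(c;q)_n=(c;q)_\infty/(cq^n;q)_\infty$, the definition \eqref{aq_bq_binom} takes the product form
\[
\qbin{x}{k}_{a;q}=q^{k(k-x)}\prod_{m=1}^{\infty}
\frac{(1-q^{k+m})(1-aq^{k+m})(1-q^{x-k+m})(1-aq^{x-k+m})}
{(1-q^{x+m})(1-aq^{x+m})(1-q^{m})(1-aq^{m})},
\]
and similarly (with the obvious substitutions) for the other three $a;q$-binomial coefficients appearing in \eqref{aq_binom_inequality}.

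I would then form the ratio $\qbin{x}{k+r}_{a;q}\qbin{y}{l-r}_{a;q}\big/\bigl(\qbin{x}{k}_{a;q}\qbin{y}{l}_{a;q}\bigr)$ and cancel all factors independent of both $k$ and $l$. What survives, for each $m\ge 1$ and each $\nu\in\{1,a\}$, regroups into the two blocks
\[
\frac{(1-\nu q^{k+r+m})(1-\nu q^{l-r+m})}{(1-\nu q^{k+m})(1-\nu q^{l+m})}
\quad\text{and}\quad
\frac{(1-\nu q^{x-k-r+m})(1-\nu q^{y-l+r+m})}{(1-\nu q^{x-k+m})(1-\nu q^{y-l+m})}.
\]
Both are exactly of the form covered by Proposition~\ref{Proposition_1}: the first is $\le 1$ by applying it to $k+m\ge l+m$ with shift $r\ge 0$ (legitimate since $k\ge l\ge r\ge 0$), and the second is $\le 1$ by applying it to $y-l+m\ge x-k+m$ with shift $r\ge 0$ (legitimate because $y-l\ge x-k$). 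Hence the whole infinite product is bounded above by $1$.

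It remains to control the $q$-power prefactor $q^{E}$, where
\[
E=(k+r)(k+r-x)+(l-r)(l-r-y)-k(k-x)-l(l-y)=r\bigl(2(k-l)+2r+(y-x)\bigr).
\]
The hypothesis $y-l\ge x-k$ rewrites as $k-l\ge x-y$, so
$2(k-l)+2r+(y-x)\ge(x-y)+2r\ge 0$, using also $r\ge 0$ and $x\ge y$. Since $0<q<1$ and $E\ge 0$, the prefactor is also $\le 1$, which combined with the product bound establishes \eqref{aq_binom_inequality}. The principal subtlety is precisely this prefactor: the asymmetric hypothesis $y-l\ge x-k$ must be invoked twice, once to apply Proposition~\ref{Proposition_1} to the second block and once to force $E\ge 0$; apart from this bookkeeping, the argument is a clean $q$-analogue of the proof of \eqref{binom_inequality} in the Example.
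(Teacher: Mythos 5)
Your proof is correct and follows essentially the same route as the paper: write the $a;q$-binomial coefficients as infinite products of linear factors, bound the two resulting blocks by instances of Proposition~\ref{Proposition_1} (your $\nu\in\{1,a\}$ with the extra $q^m$ is the same as the paper's $\nu=q^j,aq^j$), and check that the leftover power of $q$ is at most $1$. In fact your exponent $E=r\bigl(2(k-l)+2r+(y-x)\bigr)$ is the correct one --- the paper's displayed prefactor $q^{2r(r+k-l)}$ omits the term $r(y-x)$ --- and your use of $y-l\ge x-k$ (i.e.\ $k-l\ge x-y$) together with $x\ge y$ and $r\ge 0$ to conclude $E\ge 0$ supplies exactly the justification that step needs.
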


\begin{proof}
After cancellations, we see that
\begin{align*}
\frac{\qbin{x}{k+r}_{a;q}\qbin{y}{l-r}_{a;q}}
{\qbin{x}{k}_{a;q}\qbin{y}{l}_{a;q}}&=q^{2r(r+k-l)}
\frac{(q^{1+x-k-r},aq^{1+x-k-r},q^{1+k+r},aq^{1+k+r};q)_\infty}
{(q^{1+x-k},aq^{1+x-k},q^{1+k},aq^{1+k};q)_\infty}\\*
&\quad\;\times
\frac{(q^{1+y-l+r},aq^{1+y-l+r},q^{1+l-r},aq^{1+l-r};q)_\infty}
{(q^{1+y-l},aq^{1+y-l},q^{1+l},aq^{1+l};q)_\infty}.
\end{align*}
Now, $q^{2r(r+l-l)}\leq 1$ and by taking different instances of
Proposition~\ref{Proposition_1}, we have
\begin{equation}\label{eq:nu}
\frac{(1-\nu q^{k+r})(1-\nu q^{l-r})}{(1-\nu q^{k})(1-\nu q^{l})}\leq 1,
\quad\;\text{and}\quad\;
\frac{(1-\nu q^{y-l+r})(1-\nu q^{x-k-r})}{(1-\nu q^{y-l})(1-\nu q^{x-k})}\leq 1,
\end{equation}
for $\nu=q^j$ and $\nu=aq^j$, where $j=1,2,\dots$.
Thus the infinite product is also $\le 1$ and the theorem follows.
\end{proof}

We note that rhe $(b;q)$-binomial coefficients and $a,b;q$-binomial
coefficients do not appear to be log-concave with respect to their
lower parameter.

We now turn back to the $a,b;q$-binomial coefficients.
We are able to prove the discrete strong log-concavity of
$\qbin{x}{k}_{a,b;q}$
(here $x$ may be real but $k$ should be a non-negative integer) 
with respect to to the upper parameter $x$
but require that the range of the variables $a,b$ depends
on the lower parameter $k$.

\begin{theorem}\label{binom_top_log_concavity}
For any real $a,b,x,y,q$ and non-negative integer $k$ satisfying
$0<q<1$, $0<a\leq bq^k<1$, and $x\geq y\ge 1$, with the difference $x-y$ being
a non-negative integer, we have the discrete strong log-concavity
\begin{equation}\label{abq_binom_inequality}
\qbin{x}{k}_{a,b;q}\qbin{y}{k}_{a,b;q} \geq
\qbin{x+1}{k}_{a,b;q}\qbin{y-1}{k}_{a,b;q}.
\end{equation}
\end{theorem}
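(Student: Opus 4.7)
The strategy is to apply Lemma~\ref{lem:mult_Turan} to each factor in a multiplicative decomposition of $\qbin{x}{k}_{a,b;q}$ and multiply the resulting inequalities.

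I would first dispose of the trivial cases $1 \leq y \leq k$. The numerator factor $(q^{1+\nu-k};q)_k$ of $\qbin{\nu}{k}_{a,b;q}$ contains the zero $(1-q^0) = 0$ whenever $0 \leq \nu \leq k-1$; hence $\qbin{y-1}{k}_{a,b;q} = 0$ throughout $1 \leq y \leq k$, and $\qbin{y}{k}_{a,b;q} = 0$ also holds for $y \leq k-1$. Combined with the direct verification that $\qbin{x}{k}_{a,b;q} > 0$ for $x \geq k$ under the hypotheses, this makes the theorem's inequality trivial in these cases. Henceforth I assume $y \geq k+1$, so that all four binomial coefficients are strictly positive.

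For the main case, I would use the second form in \eqref{a_b_q_binom} to factor
\[\qbin{x}{k}_{a,b;q} = C_k \prod_{j=0}^{k-1} h_j(q^x),\qquad h_j(z) := \frac{(1-q^{1-k+j}z)(1-aq^{1-k+j}z)}{(1-bq^{1+j}z)(1-(a/b)q^{1-2k+j}z)},\]
where $C_k := (bq^{1+k},aq^{1-k}/b;q)_k/(q,aq;q)_k$ is $x$-independent. The desired ratio then factors as $\prod_{j=0}^{k-1} h_j(q^{x+1})h_j(q^{y-1})/[h_j(q^x)h_j(q^y)]$, and setting $\lambda := q^{y-1}$, $s := q^{x+1}$, $t := q^y$ gives $\lambda s/t = q^x$ and $0 < s \leq t < \lambda$, placing each factor exactly into the shape of the ratio in Lemma~\ref{lem:mult_Turan} applied to $h_j$.

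To verify the hypotheses of the lemma, I would put $p := q^{1-k+j}$, $P := bq^{1+j}$, $Q := (a/b)q^{1-2k+j}$, $D(z) := (1-Pz)(1-Qz)$, and exploit the key algebraic identity $PQ = ap^2$. A short calculation then gives
\[h_j'(z) = \frac{\mu(1-ap^2 z^2)}{D(z)^2},\qquad h_j''(z) = \frac{-2\mu\bigl(3ap^2z - a^2p^4z^3 - (P+Q)\bigr)}{D(z)^3},\]
with $\mu := q^{1-2k+j}(1-bq^k)(a-bq^k)/b \leq 0$ by the hypotheses (the boundary case $a = bq^k$ has $\mu = 0$ and every $h_j \equiv 1$, making the theorem an equality; so assume $a < bq^k$ below). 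The cubic in $h_j''$ attains its maximum on $z \geq 0$ at $z = 1/(p\sqrt{a})$ with value $2p\sqrt{a} - (P+Q) \leq 0$ by the AM-GM inequality $P + Q \geq 2\sqrt{PQ} = 2p\sqrt{a}$. Since $1/(p\sqrt{a}) > q^{y-1}$ (using $y \geq k+1$ and $j \geq 0$, so $q^{k-j-y} > 1 > \sqrt{a}$), the cubic is strictly negative on $(0, q^{y-1}]$; thus $h_j', h_j'' < 0$ throughout $(0, q^{y-1})$, the lemma applies, and multiplying the $k$ resulting inequalities proves the theorem.

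The main obstacle is the concavity verification of $h_j$: the elegant AM-GM step that forces $h_j'' < 0$ is made possible by the algebraic coincidence $PQ = ap^2$, which is an intrinsic feature of the biparametric structure, and it is precisely here that both hypotheses $bq^k < 1$ and $a \leq bq^k$ conspire to give the correct sign of $\mu$ and the sharpness of the AM-GM bound.
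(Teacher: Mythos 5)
Your main-case argument is correct, and it takes a genuinely different route from the paper. The paper first reduces \eqref{abq_binom_inequality} to the consecutive case $\qbin{x+1}{k}_{a,b;q}\qbin{x-1}{k}_{a,b;q}\le\qbin{x}{k}_{a,b;q}^2$ and iterates; after cancellation that ratio becomes $g(1)g(a)/[g(bq^k)g(a/bq^k)]$ with $g(u)=\prod_{i=1}^k f_{q,x-k+i,1}(u)$, so Lemma~\ref{lem:mult_Turan} is applied in the \emph{parameter} direction $u$, reusing the derivative computations for $f_{q,x,r}$ from the proof of Theorem~\ref{direct_log_concavity}. You instead keep general $x\ge y$ and apply the lemma in the \emph{argument} direction, to $h_j$ as a function of $z=q^x$ at the four points $q^{x+1},q^{y-1},q^x,q^y$ (with $\lambda=q^{y-1}$, lemma-$a=q^{x+1}$, lemma-$b=q^y$, so $\lambda a/b=q^x$); this avoids the reduction-and-iteration step entirely, at the price of a fresh concavity verification. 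I checked that verification: the factorization with $C_k$, the identity $PQ=ap^2$, the formulas for $h_j'$ and $h_j''$, the factorization $\mu=q^{1-2k+j}(1-bq^k)(a-bq^k)/b$, the AM--GM step, the location of the maximizer $1/(p\sqrt a)>q^{y-1}$, and the degenerate case $a=bq^k$ (where $P=ap$, $Q=p$, so $h_j\equiv1$) are all correct, and positivity of $h_j$ on $[0,q^{y-1}]$ and of the four binomial coefficients does hold once $y>k$.

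There is, however, a gap in your case analysis. The theorem only requires $x-y$ to be an integer, so $y$ itself may be a non-integer real in $[1,k]$; in that range your ``trivial case'' disposal fails, because $(q^{1+\nu-k};q)_k$ vanishes only when $\nu$ is an \emph{integer} in $\{0,\dots,k-1\}$. For non-integer $y\le k$ the coefficients $\qbin{y-1}{k}_{a,b;q}$ and $\qbin{y}{k}_{a,b;q}$ need not vanish and can be negative, while your main argument genuinely needs $y>k$ (already the $j=0$ numerator factor $1-q^{y-k}$ of $h_0(q^{y-1})$ changes sign at $y=k$, so $h_0$ is not positive on all of $[0,q^{y-1}]$ and the lemma's hypotheses fail). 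Also, as literally stated your dichotomy (``dispose of $1\le y\le k$, then assume $y\ge k+1$'') skips non-integer $y\in(k,k+1)$, although your main argument does cover all real $y>k$ without change. To be fair, the paper's own proof carries the same implicit restriction (its functions $f_{q,x-k+i,1}$ are only positive on $[0,1]$ when $x\ge k$, and the iteration tacitly assumes positivity of the intermediate coefficients), so this is a shared lacuna; but if you want a proof covering the hypotheses exactly as stated, the non-integer $y\le k$ range remains to be addressed.
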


\begin{proof}
All we need to show is that
\begin{equation*}
\frac{\qbin{x+1}{k}_{a,b;q}\qbin{x-1}{k}_{a,b;q}}{\qbin{x}{k}^2_{a,b;q} }\leq 1,
\end{equation*}
as then the iteration of the inequality
\begin{equation*}
\frac{\qbin{x}{k}_{a,b;q}}{\qbin{x+1}{k}_{a,b;q} }\geq
\frac{\qbin{x-1}{k}_{a,b;q}}{\qbin{x}{k}_{a,b;q} }
\end{equation*}
leads to \eqref{abq_binom_inequality}.
After canceling common factors, we obtain
\begin{equation}\label{moving_f_to_g}
\frac{\qbin{x+1}{k}_{a,b;q}\qbin{x-1}{k}_{a,b;q}}{\qbin{x}{k}^2_{a,b;q} }=
  \frac{g(1)g(a)}{g(bq^k)g(a/bq^k)},
\end{equation} 
where
\begin{equation*}
g(u):= g_{q,x,k}(x) = \frac{(1-uq^{x-k})(1-uq^{x+1})}{(1-uq^{x-k+1})(1-uq^x)}.
\end{equation*}
Observe that 
\begin{equation*}
g_{q,x,k}(u) = \prod_{i=1}^{k} f_{q,x-k+i,1}(u),
\end{equation*}
where $f_{q,x,1}(u)$ is as defined in \eqref{f_func}. 
Therefore, the right-side of \eqref{moving_f_to_g} can be rewritten as
\begin{equation*}
  \frac{\qbin{x+1}{k}_{a,b;q}\qbin{x-1}{k-1}_{a,b;q}}{\qbin{x}{k}^2_{a,b;q} }
  =\prod_{j=1}^{k}\frac{f_{q,x-k+j,1}(1)f_{q,x-k+j,1}(a)}
  {f_{q,x-k+j,1}(bq^k)f_{q,x-k+j,1}(a/bq^k)}.
\end{equation*}
The $k$ terms of the product are each less than or equal to $1$
by the $\delta=0$, $\lambda=1$ and
$b\mapsto bq^k$ case of Lemma~\ref{mult_Turan} with $f(u)$
as defined in \eqref{f_func} but with $b$ replaced by $bq^k$.
\end{proof}

This is consistent with the log-concavity of the numbers $[n]_{a,b;q}$
that we proved in Theorem~\ref{direct_log_concavity}; that is namely just
the $k=1$ and $b\mapsto b/q$ case of Theorem~\ref{binom_top_log_concavity}.

\section{Elliptic numbers and elliptic binomial coefficients}
\label{sec:ell-prim}

We start with explaining some standard notions about theta functions and
elliptic functions. A function $g$ of a complex variable $z$
is \textit{elliptic} if it is meromorphic and doubly periodic.
It follows from the theory of abelian functions (cf.\ \cite{Weber})
that there exists a non-negative integer $s$ and complex numbers
$a_1,\dots a_s$, $b_1,\dots,b_s$,  $c$ and $p$ with $0<|p|<1$ such that
\begin{equation*}
g(z)=c\,\frac{\theta(a_1z,\dots,a_sz;p)}{\theta(b_1z,\dots,b_sz;p)},
\qquad\text{where}\quad a_1\cdots a_s= b_1\cdots b_s\neq0.
\end{equation*}
Here, $\theta(x_1,\dots,x_s;p):=\theta(x_1;p)\cdots\theta(x_s;p)$,
where, for $x\neq 0$ and $|p|<1$,
\begin{equation*}
\theta(x;p)=(x,p/x;p)_\infty
\end{equation*}
is the \textit{modified Jacobi theta function} (which in short we refer
to as \textit{theta function}) of argument $x$ and \textit{nome} $p$.
It is clear that for $p=0$ we have $\theta(x;0)=(1-x)$.
So the theta function can be considered to be a $p$-extension of a
linear factor. The following properties hold for the theta function:
\begin{subequations}\label{eq:theta-easy}
\begin{align}
\theta(x;p)&=-x\theta(1/x;p),\\
\theta(px;p)&=-\frac 1x\theta(x;p),
\end{align}
\end{subequations}
and the \textit{addition formula}
\begin{equation}\label{eq:theta-addition}
\theta(xy,x/y,ut,u/t;p)-\theta(xt,x/t,uy,u/y;p)=
\frac uy\theta(yt,y/t,xu,x/u;p).
\end{equation}
The two identities in \eqref{eq:theta-easy}, which can be referred to as
\textit{inversion formula} and \textit{quasi-periodicity},
are readily shown from the definition of the theta function,
while the theta function addition formula
(originally due to Weierstra{\ss}, see \cite[p.~451, Example 5]{WW})
in \eqref{eq:theta-addition} is not obvious but crucial in the theory of
elliptic hyper\-geometric functions.
(See e.g.\ Rosengren's lecture notes \cite[Sec.~1.4]{Rosengren}
for the standard derivation of \eqref{eq:theta-addition}
involving complex analysis.)

We recall further classical facts about theta functions which
we need. In particular, we would like to have formulas involving their
first and second derivatives, so we can apply Lemma~\ref{lem:mult_Turan}
with $f$ involving a quotient of theta functions, rather than a quotient
of linear factors as in \eqref{f_func}. This will enable us to extend
Theorems~\ref{direct_log_concavity} and \ref{binom_top_log_concavity}
to the elliptic setting.

The modified Jacobi theta function is a special case of a
\textit{sigma function}. According to \cite[p.~473, \S 21.43]{WW}
the function $\sigma(z)=\sigma(z|\omega_1,\omega_2)$ formed with
the two periods $2\omega_1$, $2\omega_2$, is expressible in the form
\begin{subequations}\label{eq:sigma0}
\begin{align}
  \sigma(z|\omega_1,\omega_2)
  &=\frac{2\omega_1}{\pi}\exp\!\Big(\frac{\eta z^2}{2\omega_1}\Big)
  \sin\!\Big(\frac{\pi z}{2\omega_1}\Big)
  \prod_{n=1}^\infty
  \frac{\big(1-2\tilde{p}^{2n}\cos\frac{\pi z}{\omega_1}+\tilde{p}^{4n}\big)}
  {(1-\tilde{p}^{2n})^2},\notag\\
  &=\omega_1 \,e^{\eta z^2/2\omega_1}
    \big(e^{\pi{\mathrm i}/2\omega_1}-e^{-\pi{\mathrm i}/2\omega_1}\big)
    \frac{(\tilde{p}^2e^{\pi{\mathrm i}z/\omega_1},
    \tilde{p}^2e^{-\pi{\mathrm i}z/\omega_1};\tilde{p}^2)_\infty}
    {\pi{\mathrm i}\,(\tilde{p}^2;\tilde{p}^2)_\infty^2}\notag\\
  &=\frac{{\mathrm i}\,\omega_1e^{(\eta z^2-\pi{\mathrm i})/2\omega_1}\,
    \theta(e^{\pi{\mathrm i}/2\omega_1};\tilde{p}^2)}
    {\pi\,(\tilde{p}^2;\tilde{p}^2)_\infty^2},
\end{align}
where
\begin{equation}
  \eta=\frac{\pi^2}{12\omega_1}
  \Big(1-24\sum_{n=1}^\infty\frac{\tilde{p}^{2n}}{(1-\tilde{p}^{2n})^2}\Big)
  \quad\;\text{and}\quad\;
  \tilde{p}=e^{\frac{\pi{\mathrm i}\omega_2}{\omega_1}}.
\end{equation}
\end{subequations}

From \eqref{eq:theta-addition} it is immediate that the sigma function
satisfies the addition formula (cf.\ \cite[p.~451, Ex.~5]{WW})
\begin{align}\label{eq:sigma-addition}
  \sigma(x+y)\sigma(x-y) \sigma(u+t)\sigma(u-t)-
  \sigma(x+t)\sigma(x-t) \sigma(u+y)\sigma(u-y)&\notag\\
  =\sigma(y+t)\sigma(y-t) \sigma(x+u)\sigma(x-u)&.
\end{align}
Now let $\zeta(z):=\frac{\mathrm d}{{\mathrm d}z}\log\sigma(z)$ and
$\wp(z):=-\frac{\mathrm d}{{\mathrm d}z}\zeta(z)$ (called the
\textit{Weierstra{\ss} $\zeta$-function} and \textit{Weierstra{\ss}
  $\wp$-function}, respectively; the latter is actually an
elliptic function). Differentiation of both sides of
\eqref{eq:sigma-addition} with respect to $x$,
followed by division by $\sigma(x+y)\sigma(x-y) \sigma(u+t)\sigma(u-t)$
and putting $u=x$, gives (cf.\ \cite[p.~461, Ex.~38]{WW})
\begin{equation}\label{sigma-derivative0}
  \zeta(x+y)+\zeta(x-y)-\zeta(x+t)-\zeta(x-t)=
  \frac{\sigma'(0)\sigma(2x)\sigma(y+t)\sigma(y-t)}
  {\sigma(x+y)\sigma(x-y)\sigma(x+t)\sigma(x-t)}.
\end{equation}
Now it is not difficult to show directly, using \eqref{eq:sigma0}, that
$\sigma'(0)=1$. After making the simultaneous substitutions
$(x,y,t)\mapsto((u+v)/2,(u-v)/2,-w-(u+v)/2)$ in
\eqref{sigma-derivative0} we obtain
\begin{equation}\label{sigma-derivative}
  \zeta(u)+\zeta(v)+\zeta(w)-\zeta(u+v+w)=
  \frac{\sigma(u+v)\sigma(u+w)\sigma(v+w)}
  {\sigma(u)\sigma(v)\sigma(w)\sigma(u+v+w)},
\end{equation}
which after the simultaneous substitutions
$(u,v,w)\mapsto(2u,2v,-u-v)$ reduces to 
\begin{equation}\label{sigma-derivative2}
  \zeta(2u)+\zeta(2v)-2\zeta(u+v)=
  \frac{\sigma(2u+2v)\sigma^2(u-v)}
  {\sigma(2u)\sigma(2v)\sigma^2(u+v)}.
\end{equation}
Finally, by differentiating \eqref{sigma-derivative}
with respect to $x$ and applying \eqref{sigma-derivative} to the result,
one obtains, after another substitution of variables,
(cf.\ \cite[p.~451, Ex.~1]{WW})
\begin{equation}\label{eq:wp}
\wp(v)-\wp(u)=\frac{\sigma(u-v)\sigma(u+v)}{\sigma^2(u)\sigma^2(v)}.
\end{equation}
This important formula can be used to prove the addition formula
in \eqref{eq:sigma-addition} without knowing  the definition of the
Weierstra{\ss} $\wp$-function. While we will actually not use this function
and the formula in \eqref{eq:wp} in this paper, we nevertheless include
this very classical material to make our survey in this section more complete.

In the following we will choose as half-periods $\omega_1=\frac 12$,
$\omega_2=\frac{\tau}2$ (with $\tau\in{\mathrm i}\R^+$), and put
$p=\tilde{p}^2$, so $p=e^{2\pi{\mathrm i}\tau}$. (Notice that the condition
on $\tau$ guarantees that $0<|p|<1$.)
With these specializations we have
\begin{subequations}
  \begin{equation}\label{eq:sigma}
    \sigma(z)=\sigma\Big(z\,\Big|\frac 12,\frac{\tau}2\Big)
    =\frac{{\mathrm i}\,e^{\eta z^2-\pi{\mathrm i z}}\,
  \theta(e^{2\pi{\mathrm i}z};p)}
  {2\pi\,(p;p)_\infty^2},
\end{equation}
where
\begin{equation}
  \eta=\frac{\pi^2}6\Big(1-24\sum_{n=1}^\infty\frac{p^n}{(1-p^n)^2}\Big).
\end{equation}
\end{subequations}

We are now ready to define our elliptic numbers and elliptic binomial
coefficients. For $a,b,q,p\in\C$ with $|p|<1$ we define the elliptic
(or $a,b;q,p$-)extension of a number $x$ (which does not need to be an integer)
as follows:
\begin{equation}
[x]_{a,b;q,p}=\frac{\theta(q^x,aq^x,bq,aq/b;p)}{\theta(q,aq,bq^x,aq^x/b;p)}.
\end{equation}
(This definition corresponds to that for the elliptic numbers considered in
\cite{SchlosserYoo1,SchlosserYoo2,SchlosserYoo3,SchlosserYoo4}
subject to the substitution $b\mapsto bq^{-1}$.)

From \eqref{eq:theta-addition} we have for all
$x$ and $y$ the following addition formula
for the elliptic numbers:
\begin{subequations}\label{abq_addition-ell}
\begin{equation}
[x]_{a,b;q,p} + W_{a,b;q,p}(x) [y-x]_{aq^{2x},bq^x;q,p}
= [y]_{a,b;q,p},
\end{equation}
where $W_{a,b;q,p}(x)$ is the \textit{elliptic-weight}, defined by
\begin{equation}\label{single_weight-ell}
W_{a,b;q,p}(x) = \frac{\theta(aq^{1+2x},b,bq,a/b,aq/b;p)}
{\theta(aq,bq^x,bq^{1+x},aq^x/b,aq^{1+x}/b;p)}q^x.
\end{equation}
\end{subequations}
It is clear that the properties \eqref{positivity_dom} and
\eqref{properties} readily extend to the elliptic level.

For parameter $a\in\C$, base $0<|q|<1$, nome $|p|<1$ and non-negative
integer $k$, the \textit{theta} (or \textit{$q,p$-})\textit{shifted factorial}
is defined as
\begin{equation*}
(a;q,p)_k:=\theta(a,aq,\dots,aq^{k-1};p),
\end{equation*}
cf.\ \cite[Ch.~11]{GasperRahman}.
For brevity, we use the following compact notation for products of theta
shifted factorials:
\begin{equation*}
(a_1,\dots,a_m;q,p)_k:=(a_1;q,p)_k\cdots(a_m;q,p)_k,\qquad\text{where}\quad
k\in\N_0.
\end{equation*}

Further, for $x,a,b,q,p\in\C$ with $0<|q|<1$, $|p|<1$, and a
non-negative integer $k$
we define the elliptic (or $a,b;q,p$-)binomial coefficient as
\begin{align}\label{a_b_q_p_binom}
\qbin{x}{k}_{a,b;q,p} :={}&
\frac{(q^{1+x-k},aq^{1+x-k},bq^{1+k},aq^{1-k}/b;q,p)_k}
{(q,aq,bq^{1+x},aq^{1+x-2k}/b;q,p)_k}.
\end{align}
If we assume $x$ and $k$ being nonnegative integers, and
replace $b$ by $bq$ on the right-hand side,
the definition in \eqref{a_b_q_p_binom}
matches that of the elliptic binomial coefficient as used
in \cite{Schlosser1, SchlosserYoo1, SchlosserYoo2}.
Clearly, $\qbin{x}{1}_{a,b/q;q,p}=[x]_{a,b;q,p}$.

\section{Log-concavity of elliptic numbers and elliptic binomial coefficients}
\label{sec:ell-results}

Just as the $a,b;q$-numbers satisfy an $a,b;q$-version of continuous 
strong log-concavity where certain shifts of the variables $a$ and $b$ occur
(see Theorem ~\ref{log_concavity_abq_addition}),
the elliptic numbers satisfy the following result:

\begin{theorem}\label{log_concavity_abq_addition_ell}
Let $q,p,x,y,r,a,b$ be real numbers satisfying $0<q<1$, $0<p<1$,
$x\geq y\geq r\geq 0$, and  $0<a<b<1$.
Then the elliptic numbers satisfy the inequality 
\begin{equation*}
[x]_{aq^{2r},bq^r;q,p}[y]_{a,b;q,p} \geq [x+r]_{a,b;q,p}[y-r]_{aq^{2r},bq^r;q,p}.
\end{equation*}
\end{theorem}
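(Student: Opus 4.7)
The plan is to transfer the proof of Theorem~\ref{log_concavity_abq_addition} verbatim to the elliptic setting. Every ingredient used there has an exact elliptic analog already on hand: the addition formula \eqref{abq_addition-ell} is structurally identical to \eqref{abq_addition}, and the authors note in Section~\ref{sec:ell-prim} that the positivity properties of \eqref{positivity_dom} carry over to the elliptic level. Indeed, for $0<p<1$ and $0<z<1$ we have $\theta(z;p)=(z,p/z;p)_\infty>0$, and under $0<q<1$, $0<a<b<1$, $x\ge 0$ one checks that each theta-function argument appearing in the numerator and denominator of $[x]_{a,b;q,p}$ and $W_{a,b;q,p}(x)$ lies strictly between $0$ and $1$; hence both quantities are positive.

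The algebraic manipulation then runs exactly as before. I would apply \eqref{abq_addition-ell} with $(x,y)\mapsto (r,y)$ to write
\[
[y]_{a,b;q,p}=[r]_{a,b;q,p}+W_{a,b;q,p}(r)\,[y-r]_{aq^{2r},bq^r;q,p},
\]
and with $(x,y)\mapsto (r,x+r)$ to write
\[
[x+r]_{a,b;q,p}=[r]_{a,b;q,p}+W_{a,b;q,p}(r)\,[x]_{aq^{2r},bq^r;q,p}.
\]
Substituting these two expansions into
\[
[x]_{aq^{2r},bq^r;q,p}[y]_{a,b;q,p}-[x+r]_{a,b;q,p}[y-r]_{aq^{2r},bq^r;q,p},
\]
the terms carrying the weight $W_{a,b;q,p}(r)$ cancel, leaving $[r]_{a,b;q,p}\bigl([x]_{aq^{2r},bq^r;q,p}-[y-r]_{aq^{2r},bq^r;q,p}\bigr)$. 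One more application of the elliptic addition formula, now with base parameters $(aq^{2r},bq^r)$ and arguments $(y-r,x)$, rewrites this difference as
\[
[r]_{a,b;q,p}\,W_{aq^{2r},bq^r;q,p}(y-r)\,[x-y+r]_{aq^{2y},bq^y;q,p},
\]
and all three factors are non-negative by the elliptic extension of \eqref{positivity_dom}.

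The only step that is not purely mechanical is the verification that the shifted parameter pair $(aq^{2r},bq^r)$ still lies in the admissible positivity range required to invoke the elliptic analog of \eqref{positivity_dom}; this reduces to checking $0<aq^{2r}<bq^r<1$, which is immediate from $0<a<b<1$, $0<q<1$ and $r\ge 0$. Since no new analytic input beyond the addition formula \eqref{abq_addition-ell} and basic positivity is used, I do not expect any real obstacle; the argument is essentially a line-by-line lift of Theorem~\ref{log_concavity_abq_addition}, and no appeal to the more delicate Lemma~\ref{lem:mult_Turan} is needed here (that lemma will only enter in the proof of the companion Theorem~\ref{direct_log_concavity_ell} where the shifts in $a,b$ are to be eliminated).
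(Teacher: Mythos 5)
Your route is the same as the paper's: its proof of this theorem consists precisely of repeating the computation from Theorem~\ref{log_concavity_abq_addition} with the elliptic addition formula \eqref{abq_addition-ell} in place of \eqref{abq_addition} and invoking the elliptic extension of \eqref{positivity_dom}, and your expansion, cancellation of the weight terms, and final rewriting of the difference as $[r]_{a,b;q,p}\,W_{aq^{2r},bq^r;q,p}(y-r)\,[x-y+r]_{aq^{2y},bq^y;q,p}$ reproduce that argument exactly. One correction to your supporting remarks, though: it is not true that $\theta(z;p)>0$ for all $0<z<1$ and $0<p<1$. The sign of $\theta(z;p)=(z,p/z;p)_\infty$ equals $(-1)^N$ with $N=\#\{j\geq 1:\ p^j>z\}$, so for instance $\theta(z;p)<0$ when $p^2<z<p$; hence having all theta arguments in $(0,1)$ does not by itself give positivity of $[x]_{a,b;q,p}$ and $W_{a,b;q,p}(x)$ --- for that one needs the arguments (the smallest of which, such as $aq^x$, can drop below $p$ when $x$ is large) to stay above $p$, i.e.\ a restriction tying $p$ to $q,a,b$ and the arguments, of the kind imposed in Theorems~\ref{direct_log_concavity_ell} and \ref{binom_top_log_concavity_ell}. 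The paper itself merely asserts at the end of Section~\ref{sec:ell-prim} that \eqref{positivity_dom} ``readily extends'' to the elliptic level, so you are leaning on the same assumption as the authors; just replace your one-line justification of it, which is false as stated, by that assumption or by an explicit smallness condition on $p$.
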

\begin{proof}
The proof is just like that of Theorem ~\ref{log_concavity_abq_addition},
but we use \eqref{abq_addition-ell} instead of  \eqref{abq_addition}.
\end{proof}

In the remaining section we will consider log-concavity results where the
variables $a$ and $b$ do \textit{not} shift.
Our results crucially depend on the following proposition.
\begin{proposition}\label{prop:theta}
  Let $q,p,x,r$ be real numbers satisfying $0<q<1$, $x\geq r>0$ and
  $0<p<q^{2r}$, and let the function $f$ be defined by  
\begin{equation}\label{eq:f_theta}
  f(u):=f_{q,p,x,r}(u)=\frac{\theta(uq^{x+r},uq^{x-r};p)}{\theta^2(uq^{x};p)}
  \qquad\text{for $u\in[\delta,\lambda]$},
\end{equation}
where $\delta=pq^{-x-r}$ and $\lambda=q^{r-x}$.
Then $f$ is continuous and positive on the interval $[\delta,\lambda]$,
with first and second derivatives being negative on the interval
$(\delta,\lambda)$.
\end{proposition}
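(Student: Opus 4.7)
The plan is to pass from theta functions to the Weierstra{\ss} $\sigma$- and $\wp$-functions via \eqref{eq:sigma}, and then use the identity \eqref{eq:wp} to reduce $f$ to a constant times a $\wp$-difference. Since $0<p,q<1$, write $q=e^{-2\pi\beta}$ and $p=e^{-2\pi\alpha}$, so that $\rho:=\mathrm{i}\beta$ and $\tau:=\mathrm{i}\alpha$ satisfy $q=e^{2\pi\mathrm{i}\rho}$ and $p=e^{2\pi\mathrm{i}\tau}$. The substitution $u=q^{-x}e^{2\pi\mathrm{i}z}$ sends $uq^{x\pm r}$ to $e^{2\pi\mathrm{i}(z\pm r\rho)}$ and $uq^{x}$ to $e^{2\pi\mathrm{i}z}$. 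Inserting \eqref{eq:sigma} into each of the four theta factors of $f(u)$, the $2\pi(p;p)_\infty^2/\mathrm{i}$ prefactors cancel (four in the numerator against four in the squared denominator) and the exponential factors in $\pi\mathrm{i}z$ and $\eta z^2$ telescope, giving
\[
f(u)=e^{-2\eta r^2\rho^2}\cdot\frac{\sigma(z+r\rho)\sigma(z-r\rho)}{\sigma^2(z)}.
\]
Applying \eqref{eq:wp} with $u\mapsto z$ and $v\mapsto r\rho$ then yields
\[
f(u)=K\bigl(\wp(r\rho)-\wp(z)\bigr),\qquad K:=e^{-2\eta r^2\rho^2}\sigma^2(r\rho),
\]
with the constant $K$ depending only on $q,p,r$.

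For positivity, observe that $u>0$ real corresponds to $z=\mathrm{i}t$ purely imaginary, and $u\in[\delta,\lambda]$ to $t\in[r\beta,\alpha-r\beta]$, a nondegenerate interval exactly under the hypothesis $\alpha>2r\beta$, equivalent to $p<q^{2r}$. Along this contour $\psi(t):=\wp(\mathrm{i}t)$ is real-valued and smooth on $(0,\alpha)$, symmetric about $t=\alpha/2$ by the periodicity and evenness of $\wp$, and attains its maximum $e_{3}$ at $t=\alpha/2$. From \eqref{eq:sigma} one reads off directly that $\sigma(\mathrm{i}r\beta)$ is purely imaginary, so $\sigma^{2}(r\rho)<0$; combined with $e^{-2\eta r^{2}\rho^{2}}=e^{2\eta r^{2}\beta^{2}}>0$ this gives $K<0$. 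Since $\wp(r\rho)-\wp(z)=\psi(r\beta)-\psi(t)\leq 0$ on the interval, with equality exactly at $t=r\beta$ and $t=\alpha-r\beta$, $f$ is the product of two nonpositive quantities, hence nonnegative, and strictly positive in the interior.

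For the derivatives, I would apply the chain rule with $t(u)=-\log(uq^{x})/(2\pi)$, $t'(u)=-1/(2\pi u)$, to obtain
\[
f'(u)=\frac{K\,\psi'(t)}{2\pi u},\qquad f''(u)=\frac{K\,\psi''(t)}{(2\pi u)^{2}}-\frac{K\,\psi'(t)}{2\pi u^{2}},
\]
and invoke the Weierstra{\ss} ODE $\wp''=6\wp^{2}-g_{2}/2$, which translates to $\psi''(t)=-6\psi(t)^{2}+g_{2}/2$. The claim that $f'(u),f''(u)<0$ on $(\delta,\lambda)$ then reduces to sign conditions on combinations of $\psi,\psi',\psi''$ along $(r\beta,\alpha-r\beta)$. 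The hard part will be verifying these conditions uniformly on the interval: one uses the Weierstra{\ss} ODE together with the hypothesis $p<q^{2r}$ (equivalently $\alpha>2r\beta$), which confines the contour to the region in which the monotonicity of $\wp$ along the imaginary axis forces the required signs.
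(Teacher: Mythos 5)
Your opening reduction is essentially the paper's own first step: passing to the sigma function via \eqref{eq:sigma} gives $f(u)=e^{-2\eta r^2\rho^2}\,\sigma(z+r\rho)\sigma(z-r\rho)/\sigma^2(z)$, and your further rewriting $f(u)=K\bigl(\wp(r\rho)-\wp(z)\bigr)$ with $K=e^{-2\eta r^2\rho^2}\sigma^2(r\rho)<0$, obtained from \eqref{eq:wp}, is correct and clean. (The paper proceeds differently from there: it differentiates and uses the $\zeta$-identity \eqref{sigma-derivative2} to obtain the closed form $f'(u)=-q^{x-r}(p;p)_\infty^2\,\theta^2(q^r;p)\,\theta(u^2q^{2x};p)/\theta^4(uq^x;p)$, and then handles $f''$ by comparing logarithmic derivatives.) However, the actual content of the proposition --- the negativity of $f'$ and $f''$ --- is nowhere established in your write-up: you reduce it to ``sign conditions on combinations of $\psi,\psi',\psi''$'' and explicitly defer ``the hard part''. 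As it stands, this is an outline, not a proof.

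More seriously, the verification you propose (uniform negativity of $f'$ on all of $(\delta,\lambda)$) cannot be carried out, and your own formulas show why. From $f'(u)=K\psi'(t)/(2\pi u)$ with $K<0$, and with $\psi(t)=\wp(\mathrm{i}t)$ real on $(0,\alpha)$, symmetric about $t=\alpha/2$ and having its unique interior maximum there, $f'$ vanishes at $t=\alpha/2$, i.e.\ at $u=\sqrt{p}\,q^{-x}$; since $p<q^{2r}$ means $\alpha>2r\beta$, this point lies strictly inside $(\delta,\lambda)$, and $f'>0$ for $\delta<u<\sqrt{p}\,q^{-x}$ while $f'<0$ for $\sqrt{p}\,q^{-x}<u<\lambda$. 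This is forced in any case by the mean value theorem together with your own positivity discussion: $f(\delta)=f(\lambda)=0$ (because $\theta(p;p)=\theta(1;p)=0$) while $f>0$ in the interior, so $f'$ must be positive somewhere near $u=\delta$. The same dichotomy is visible in the closed form above through the factor $\theta(u^2q^{2x};p)$, which is negative for $u^2q^{2x}\in(p^2,p)$, i.e.\ precisely for $u$ between $\delta$ and $\sqrt{p}\,q^{-x}$. So the sign conditions you hope to verify ``uniformly on the interval'' are false near the left endpoint; any correct argument must restrict to $u$ with $u^2q^{2x}>p$ (equivalently $t<\alpha/2$), and your plan neither notices nor resolves this. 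A smaller slip: since $t'(u)=-1/(2\pi u)$, the chain rule gives $f''(u)=-K\psi''(t)/(2\pi u)^2-K\psi'(t)/(2\pi u^2)$, so your displayed $f''$ has the wrong sign on the $\psi''$ term.
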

\begin{proof}
  The continuity and positivity is clear. To compute the derivatives
  on $(\delta,\lambda)$,  we rewrite $f(u)$ in terms of the sigma function.
  Let $p=e^{2\pi{\mathrm i}\tau}$, $q=e^{2\pi{\mathrm i}\nu}$, and 
  $u=e^{2\pi{\mathrm i}z}$, where $\tau,\nu,z\in{\mathrm i}\R$.
  We have ${\mathrm d}u=2\pi{\mathrm i}\,e^{2\pi{\mathrm i}z}{\mathrm d}z$,
  or equivalently, $\frac{\mathrm d}{{\mathrm d}z}=2\pi{\mathrm i}u\,
  \frac{\mathrm d}{{\mathrm d}u}$, and, according to \eqref{eq:sigma},
  \begin{align*}
    f(e^{2\pi{\mathrm i}z})
    =
    \frac{\theta(e^{2\pi{\mathrm i}((x+r)\nu+z)},
    e^{2\pi{\mathrm i}((x-r)\nu+z)};p)}
    {\theta^2(e^{2\pi{\mathrm i}(x\nu+z)};p)}
    =e^{-2\eta r^2\nu^2}\,\frac{\sigma(z+(x+r)\nu)\sigma(z+(x-r)\nu)}
      {\sigma^2(z+x\nu)}.
\end{align*}
Thus, the first derivative of $f$ is
\begin{align*}
  \frac{\mathrm d}{{\mathrm d}u}\,f(u)
  &=
\frac{e^{-2\pi{\mathrm i}z}}{2\pi{\mathrm i}}\,
  \frac{\mathrm d}{{\mathrm d}z}\,f(e^{2\pi{\mathrm i}z})=
    \frac{e^{-2\pi{\mathrm i}z-2\eta r^2\nu^2}}{2\pi{\mathrm i}}\,
    \frac{\mathrm d}{{\mathrm d}z}\,
    \frac{\sigma(z+(x+r)\nu)\sigma(z+(x-r)\nu)}
    {\sigma^2(z+x\nu)}\\
  &=\frac{e^{-2\pi{\mathrm i}z-2\eta r^2\nu^2}}{2\pi{\mathrm i}\,\sigma^3(z+x\nu)}
    \big[\sigma(z+x\nu)\sigma(z+(x+r)\nu)\sigma'(z+(x-r)\nu)\\*
  &\qquad\qquad\qquad\qquad
    +\sigma(z+x\nu)\sigma'(z+(x+r)\nu)\sigma(z+(x-r)\nu)\\*
  &\qquad\qquad\qquad\qquad
    -2\sigma'(z+x\nu)\sigma(z+(x+r)\nu)\sigma(z+(x-r)\nu)\big]\\
  &=\frac{e^{-2\pi{\mathrm i}z}}{2\pi{\mathrm i}}f(e^{2\pi{\mathrm i}z})
    \big[\zeta(z+(x+r)\nu)+\zeta(z+(x-r)\nu)-2\zeta(z+x\nu)\big]\\
  &=\frac{e^{-2\pi{\mathrm i}z-2\eta r^2\nu^2}}{2\pi{\mathrm i}}
    \frac{\sigma(2(z+x\nu))\sigma^2(r\nu)}{\sigma^4(z+x\nu)}\\
  & =-q^{x-r}\frac{(p;p)^2_\infty\theta^2(q^r;p)\theta(u^2q^{2x};p)}
    {\theta^4(uq^x;p)},
  \end{align*}
  where we applied \eqref{sigma-derivative2} in the penultimate
  equality, and used \eqref{eq:sigma} to rewrite the expression
  in terms of the modified theta function in the last equality.
  The final expression obtained for the first derivative of $f(u)$
  is clearly negative. (The minus sign in front
  of the product stems from collecting all the factors $\mathrm i$
  from the applications of \eqref{eq:sigma}; indeed,
  ${\mathrm i}^3/{\mathrm i}^5=-1$.)
  
  We still need to compute the second derivative of $f$ and show that it
  is also negative.
 We have 
 \begin{align*}
   \frac{{\mathrm d}^2}{{\mathrm d}u^2}f(u)
   &=-q^{x-r}(p;p)^2_\infty\theta^2(q^r;p)\,
   \frac{\mathrm d}{{\mathrm d}u}\frac{\theta(u^2q^{2x};p)}
     {\theta^4(uq^x;p)}\\
      &=-q^{x-r}\frac{(p;p)^2_\infty\theta^2(q^r;p)\theta(u^2q^{2x};p)}
     {\theta^4(uq^x;p)}
        \Big(\frac{\mathrm d}{{\mathrm d}u}\log\theta(u^2q^{2x};p)-
        4\frac{\mathrm d}{{\mathrm d}u}\log\theta(uq^x;p)\Big).
 \end{align*}
 All we need to do is to show that
 \begin{equation*}
   \frac{\mathrm d}{{\mathrm d}u}\log\theta(u^2q^{2x};p)>
        4\frac{\mathrm d}{{\mathrm d}u}\log\theta(uq^x;p).
   \end{equation*}
   Since the logarithmic derivative of a product is the sum
   of the logarithmic derivatives, we thus need to show
\begin{align*}
&\sum_{j=0}^\infty\frac{\mathrm d}{{\mathrm d}u}\log
  \big[(1-p^ju^2q^{2x})+(1-p^{j+1}u^{-2}q^{-2x})\big]\\
  &>
4\sum_{j=0}^\infty\frac{\mathrm d}{{\mathrm d}u}\log
\big[(1-p^juq^{x})+(1-p^{j+1}u^{-1}q^{-x})\big].
\end{align*}
The inequality actually holds term-wise, for each $j$.
Indeed, comparison of the logarithmic derivatives for fixed $j$ amounts to
\begin{equation*}
  \frac{-2p^juq^{2x}}{1-p^ju^2q^{2x}}+
  \frac{4p^{j+1}u^{-3}q^{-2x}}{1-p^{j+1}u^{-2}q^{-2x}}>
   \frac{-4p^jq^{x}}{1-p^juq^{x}}+
  \frac{4p^{j+1}u^{-2}q^{-x}}{1-p^{j+1}u^{-1}q^{-x}},
\end{equation*}
which is easy to verify, as already the first summand on the left-hand
side is greater than the first summand on the right-hand side,
which analogously holds for the second summands on both sides.
\end{proof}

Having set up all the ingredients, the results now
immediately extend from the $a,b;q,p$-case to the elliptic case.

\begin{theorem}\label{direct_log_concavity_ell}
The elliptic numbers $[x]_{a,b;q,p}$ are continuously
strongly log-concave. In particular, for all real numbers $q,p,x,y,r,a,b$
satisfying $0<q<1$, $0< p<q^{2r}$, $x\geq y\geq r\geq 0$, 
and $pq^{-x-r}<a<b<1$ we have
\begin{equation*}
[x]_{a,b;q,p}[y]_{a,b;q,p}\geq [x+r]_{a,b;q,p}[y-r]_{a,b;q,p}.
\end{equation*}
\end{theorem}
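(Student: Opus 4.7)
The plan is to mirror the proof of Theorem~\ref{direct_log_concavity}, with Proposition~\ref{prop:theta} playing the role that the elementary computation of $f'$ and $f''$ played in the $p=0$ case. First, I would appeal to the remark immediately following Definition~\ref{log_concavity-func}: since $[x]_{a,b;q,p}>0$ on the domain under consideration (the elliptic extension of \eqref{positivity_dom} noted in Section~\ref{sec:ell-prim}), continuous log-concavity already implies continuous strong log-concavity, so it suffices to establish the single-variable inequality
$$\frac{[x+r]_{a,b;q,p}\,[x-r]_{a,b;q,p}}{[x]^2_{a,b;q,p}} \leq 1.$$

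Next, I would expand this ratio using the definition of $[x]_{a,b;q,p}$. The common theta-factors $\theta(q,aq,bq,aq/b;p)$ cancel between the product $[x+r]_{a,b;q,p}[x-r]_{a,b;q,p}$ and $[x]^2_{a,b;q,p}$, and after grouping the remaining factors according to the base multiplier ($1$, $a$, $b$ or $a/b$) of the arguments $q^{x\pm r}$ and $q^x$, one obtains
$$\frac{[x+r]_{a,b;q,p}\,[x-r]_{a,b;q,p}}{[x]^2_{a,b;q,p}} = \frac{f(1)\,f(a)}{f(b)\,f(a/b)},$$
where $f(u)=\theta(uq^{x+r},uq^{x-r};p)/\theta^2(uq^x;p)$ is precisely the function analysed in Proposition~\ref{prop:theta}.

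It then remains to invoke Lemma~\ref{lem:mult_Turan} with $\delta=pq^{-x-r}$, with the parameter $\lambda$ of the lemma chosen to be $1$, and with the $a,b$ of the lemma identified with those of the theorem. The chain $\delta<a\leq b<1$ required by the lemma is exactly the hypothesis $pq^{-x-r}<a<b<1$ (which in particular gives $\delta<1$), while $1\leq q^{r-x}$ follows from $r\leq x$; the latter shows $[\delta,1]\subseteq[\delta,q^{r-x}]$, so the continuity, positivity, and negativity of the first and second derivatives of $f$ supplied by Proposition~\ref{prop:theta} on $(\delta,q^{r-x})$ descend to the sub-interval $[\delta,1]$. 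The lemma then yields $f(1)f(a)/(f(b)f(a/b))\leq 1$, precisely the desired inequality.

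The main obstacle is effectively absorbed into Proposition~\ref{prop:theta}: verifying the negativity of $f'$ and $f''$ for the theta-function quotient is substantially more delicate than for the rational function \eqref{f_func}, and this is where the detour through the Weierstra{\ss} sigma and zeta functions together with the addition identity \eqref{sigma-derivative2} is indispensable. Once that proposition is in place, the passage from the $p=0$ case to the elliptic case is essentially a translation exercise.
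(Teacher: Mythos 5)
Your proposal is correct and follows essentially the same route as the paper: reduce to the single-variable log-concavity ratio via positivity, rewrite it as $f(1)f(a)/(f(b)f(a/b))$ with $f$ as in \eqref{eq:f_theta}, and apply Lemma~\ref{lem:mult_Turan} with $\delta=pq^{-x-r}$, $\lambda=1$, using Proposition~\ref{prop:theta} on the containing interval $[\delta,q^{r-x}]$. Your write-up in fact spells out the cancellations and the interval containment slightly more explicitly than the paper's own (very terse) proof does.
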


\begin{proof}
  The proof follows the lines of that of the proof of
  Theorem~\ref{direct_log_concavity}
but we use, instead of \eqref{f_func},
$f(u)$ as defined in \eqref{eq:f_theta}, where $u\in[\delta,1]$
with $\delta=pq^{-x-r}$.
As a consequence of Proposition~\ref{prop:theta}, $f$ satisfies on
$[\delta,1]$ (which is contained in $[\delta,\lambda]$ with $\lambda=q^{r-x}$)
the necessary requirements for application of Lemma~\ref{lem:mult_Turan},
which we invoke with $\lambda=1$.
\end{proof}

\begin{theorem}\label{binom_top_log_concavity_ell}
For any real $q,p,a,b,x,y$ and non-negative integer $k$ satisfying $0<q<1$,
$0<p<q^2$, $pq^{-x-1}<a\leq bq^k<1$, and $x\geq y\ge 1$,
with the difference $x-y$ being
a non-negative integer, we have the discrete strong log-concavity
of elliptic binomial coefficients
\begin{equation*}\
\qbin{x}{k}_{a,b;q,p}\qbin{y}{k}_{a,b;q,p} \geq
\qbin{x+1}{k}_{a,b;q,p}\qbin{y-1}{k}_{a,b;q,p}.
\end{equation*}
\end{theorem}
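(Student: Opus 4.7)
The plan is to follow the template of the proof of Theorem~\ref{binom_top_log_concavity} almost verbatim, replacing the rational function $f$ from \eqref{f_func} with its theta-function analogue from \eqref{eq:f_theta} and invoking Proposition~\ref{prop:theta} in place of the elementary calculus used in the $p=0$ case. First I would reduce to pure discrete log-concavity in $x$: since all the elliptic binomial coefficients involved are positive (this follows from the hypotheses $0<p<q^2$ and $pq^{-x-1}<a\leq bq^k<1$, which keep every theta-factor strictly between $p$ and $1$), it suffices to show
\begin{equation*}
\qbin{x+1}{k}_{a,b;q,p}\qbin{x-1}{k}_{a,b;q,p}\leq\qbin{x}{k}^2_{a,b;q,p},
\end{equation*}
whereupon the ratio $\qbin{x}{k}/\qbin{x+1}{k}\geq\qbin{x-1}{k}/\qbin{x}{k}$ iterates to the claimed strong log-concavity in the range $x\geq y\geq 1$ with $x-y\in\N_0$.

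Next, in the ratio
\begin{equation*}
R\;:=\;\frac{\qbin{x+1}{k}_{a,b;q,p}\qbin{x-1}{k}_{a,b;q,p}}{\qbin{x}{k}^2_{a,b;q,p}},
\end{equation*}
the four theta shifted factorials in the binomial coefficients that do not depend on $x$, namely $(bq^{1+k};q,p)_k$, $(aq^{1-k}/b;q,p)_k$, $(q;q,p)_k$ and $(aq;q,p)_k$, cancel completely. Grouping the surviving factors by the common summation index $j\in\{0,\ldots,k-1\}$ and setting $i=j+1$, I expect $R$ to telescope into
\begin{equation*}
R=\prod_{i=1}^{k}
\frac{f_{q,p,x-k+i,1}(1)\,f_{q,p,x-k+i,1}(a)}
{f_{q,p,x-k+i,1}(bq^k)\,f_{q,p,x-k+i,1}(a/(bq^k))},
\end{equation*}
where $f_{q,p,X,1}(u)=\theta(uq^{X+1},uq^{X-1};p)/\theta^2(uq^X;p)$ as in \eqref{eq:f_theta}. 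The factor $f(1)$ comes from $(q^{\,\cdot};q,p)_k$ in the numerator of $\qbin{\cdot}{k}$, $f(a)$ from $(aq^{\,\cdot};q,p)_k$, and $f(bq^k)$ and $f(a/(bq^k))$ from $(bq^{\,\cdot};q,p)_k$ and $(aq^{\,\cdot}/b;q,p)_k$ in the denominator (the latter two appearing in the denominator of $R$, which matches the $f(b)f(\lambda a/b)$ slots in Lemma~\ref{lem:mult_Turan}).

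Finally, for each fixed $i$, Proposition~\ref{prop:theta} (with $X=x-k+i$ and $r=1$) guarantees that $f_{q,p,x-k+i,1}$ is positive on $[\delta_i,\lambda_i]$ with negative first and second derivatives on the interior, where $\delta_i=pq^{-(x-k+i)-1}$ and $\lambda_i=q^{1-(x-k+i)}$. Lemma~\ref{lem:mult_Turan} then applies to each factor of the product with lemma-parameters $\lambda=1$, $a=a$, $b=bq^k$, giving that each factor is $\leq 1$. The hypothesis $pq^{-x-1}<a$ is exactly what is needed for the most restrictive case $i=k$, where $\delta_k=pq^{-x-1}$; for $i<k$ the lower endpoint $\delta_i$ is even smaller, so $\delta_i<a$ automatically. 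The hypothesis $0<p<q^2$ is $p<q^{2r}$ with $r=1$, required by Proposition~\ref{prop:theta}, and $bq^k<1=\lambda$ places $a\le bq^k$ in the required range.

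I expect the main obstacle to be the bookkeeping in the second step: verifying that the numerator and denominator theta factors of $R$ really do group into exactly the four evaluations of a single $f_{q,p,x-k+i,1}$, and that the indexing works out simultaneously for all four families. Once this identification is made the analytic work is entirely absorbed into Proposition~\ref{prop:theta} and Lemma~\ref{lem:mult_Turan}, just as the $p=0$ case was reduced to Lemma~\ref{lem:mult_Turan} applied to the rational $f$ of \eqref{f_func}; no new inequalities are needed.
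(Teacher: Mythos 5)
Your proposal is correct and follows essentially the same route as the paper: the paper's proof of Theorem~\ref{binom_top_log_concavity_ell} is precisely to repeat the argument of Theorem~\ref{binom_top_log_concavity}, replacing the rational $f$ of \eqref{f_func} by the theta quotient \eqref{eq:f_theta} on $[\delta,1]$ with $\delta=pq^{-x-1}$, and invoking Proposition~\ref{prop:theta} together with Lemma~\ref{lem:mult_Turan}. Your telescoping of the ratio into $\prod_{i=1}^{k}f_{q,p,x-k+i,1}(1)f_{q,p,x-k+i,1}(a)/\bigl(f_{q,p,x-k+i,1}(bq^k)f_{q,p,x-k+i,1}(a/(bq^k))\bigr)$ and your identification of $i=k$ as the binding case for the hypothesis $pq^{-x-1}<a$ are exactly the bookkeeping the paper leaves implicit.
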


\begin{proof}
The proof follows the lines of that of the proof of
Theorem~\ref{binom_top_log_concavity}
but we use, instead of \eqref{f_func},
$f(u)$ as defined in \eqref{eq:f_theta}, where $u\in[\delta,1]$
with $\delta=pq^{-x-1}$, for application of Lemma~\ref{lem:mult_Turan}.
\end{proof}

\section{Conclusion}
\label{sec:con}

We have shown that fundamental log-concavity results which are known
to hold for (the $q$-)numbers and ($q$-)binomial coefficients can be extended
to the $a,b;q$-case and even to the elliptic case.
Further closely related questions remain open,
for instance about the log-concavity of $a,b;q$- or elliptic extensions
of those other sequences considered by Sagan 
in \cite{Sagan2} or of the $a,b;q$- or elliptic rook numbers
(considered by the first author and Yoo~\cite{SchlosserYoo2,SchlosserYoo3}).

\section*{Acknowledgements}
All three authors thank the FWF Austrian Science Fund for generous support.
In particular, the first author was partially supported by
grants F50-08 and P32305,
the second author was fully supported by grants  F50-08 and F50-10,
and the third author was fully supported by grants F50-07, F50-09 and F50-11.

\end{document}